\numberwithin{equation}{section}
\def\qed{\hfill$\Box$\par}
\def \C{\hbox{$C\hskip -5pt \vrule height 6pt depth 0pt \hskip 6pt$}}
\def\qed{\ \ \ifhmode\unskip\nobreak\fi\ifmmode\ifinner
         \else\hskip5pt\fi\fi
 \hbox{\hskip5pt\vrule width4pt height6pt depth1.5pt\hskip 1 pt}}
\def\cl{\centerline}
\def\vs{\vspace*}
\def\C{\mathbb{C}}
\def\mS{\mathfrak{S}}
\def\mH{\mathcal{H}}
\def\U{\mathcal{U}}
\newtheorem{theo}{{Theorem}}[section]
\newtheorem{lemm}[theo]{Lemma}
\newtheorem{rema}[theo]{Remark}
\newtheorem{defi}[theo]{Definition}
\newtheorem{coro}[theo]{Corollary}
\newtheorem{prop}[theo]{Proposition}
\begin{document}
\cl{\large{\bf Quasi-Whittaker modules for the Schr\"{o}dinger algebra }
\noindent\footnote{Supported by the National Science Foundation of
China (No. 11047030, 11101388, 11171055, 11001046). }}
\vspace{10pt}
\cl{Yan-an Cai}
\cl{\small School of Mathematical Sciences, University of Science }
\cl{\small and Technology of China, Hefei 230026, China}
\cl{\small E-mail: yatsai@mail.ustc.edu.cn}
\vspace{10pt}
\cl{Yongsheng Cheng}
\cl{\small School
of Mathematics and Information Sciences}
\cl{\small \& Institute of Contemporary Mathematics, } \cl{\small Henan
University, Kaifeng 475004, China}

\cl{\small E-mail:  yscheng.math@gmail.com}
\vspace{10pt}
\cl{Ran Shen}
\cl{\small College of Science, Donghua University, Shanghai, 201620, China}
\cl{\small Email:rshen@dhu.edu.cn}
\vspace{10pt}
\noindent{{\bf Abstract.}
In this paper, we construct a new class of
modules for the Schr\"{o}dinger algebra $\mS$, called quasi-Whittaker module.
Different from \cite{[ZC]}, the quasi-Whittaker
module is not induced by the Borel subalgebra of the Schr\"{o}dinger algebra
related with the triangular decomposition, but its Heisenberg subalgebra $\mH$.
We prove that, for a simple $\mS$-module $V$,
$V$ is a quasi-Whittaker module if and only if $V$ is a locally finite $\mH$-module; Furthermore, we classify the simple
quasi-Whittaker modules by the elements with the action
 similar to the center elements in $U(\mS)$
and their quasi-Whittaker vectors.
Finally, we characterize arbitrary quasi-Whittaker modules. }

\noindent{{\bf Keywords:}
Schr\"{o}dinger algebra; locally finite modules;
quasi-Whittaker modules; Simple modules
}

\noindent{\bf Mathematics Subject Classification (2010):} 17B10,
17B65,  17B68.
\vspace{10pt}
\vs{6pt}
\par
\cl{\bf\S1. \ Introduction}
\setcounter{section}{1}\setcounter{theo}{0}\setcounter{equation}{0}
\par
In \cite{[K]}, B. Kostant defined a class of
modules for a finite-dimensional complex
semisimple Lie algebra. He called these modules as
Whittaker modules because of their
connections with the Whittaker equations that arise in the study of the
corresponding representations of the associated Lie group.
The traditional definition of Whittaker modules is closely tied to
the triangular decomposition
of a Lie algebra. Results for the complex semisimple Lie
algebras have been extended to quantum groups for $\U_q(\mathfrak{g})$
\cite{[S]}, and $\U_q(\mathfrak{sl}_2)$ \cite{[O]}, Virasoro algebra
\cite{[GL], [LGZ], [OW]}, Schr\"{o}dinger-Witt algebra \cite{[ZTL]},
Heisenberg algebra and affine Kac-Moody algebra \cite{[C], [GZ]},
Heisenberg-Virasoro algebra \cite{[CG], [LWZ]},
Weyl algebra \cite{[BO]} and some other
infinite dimensional Lie algebras \cite{[T], [Wa]}.
Specially, in \cite{[B]}, the author proved that all
the simple $\mathfrak{sl}_2$-modules fall into three families: highest (lowest) weight modules,
Whittaker modules and a third family obtained by localization;
in \cite{[C]}, the author studied a new class of modules,
which was called imaginary Whittaker module
over the non-twisted affine Lie algebra induced by
its parabolic subalgebras.
Inspired by recent activities on Whittaker modules
over various algebras, the authors in \cite{[BM]} described some general framework for
the study of Lie algebra modules locally finite over a subalgebra. In \cite{[MZ]},
in order to classify the simple modules over a family of finite dimensional
solvable Lie algebras, the authors studied and
classified the simple Virasoro modules which are locally finite over a positive part.

The Schr\"{o}dinger algebra is the Lie algebra of
Schr\"{o}dinger group, which is the symmetry
group of the free particle Schr\"{o}dinger equation.
The Schr\"{o}dinger algebra plays an important role in
mathematical physics and its applications \cite{[DDM]}.
Let $\mS$ denote the Schr\"{o}dinger algebra. Then
$\mS=\mathrm{span}_{\C}\{e,h,f,p,q,z\}$ with the following Lie brackets:
\begin{eqnarray}
\label{Sch-def}
\begin{split}
&[h,e]=2e, &[&h,f]=-2f, &[&e,f]=h,\\
&[h,p]=p, &[&h,q]=-q, &[&p,q]=z,\\
&[e,q]=p, &[&p,f]=-q, &[&f,q]=0,\\
&[e,p]=0, &[&z,\mS]=0. &
\end{split}\end{eqnarray}
From this we see that $\mS$ contains two subalgebras: the Heisenberg
subalgebra $\mathcal{H}=\mathrm{span}_\C\{p,q,z\}$
and $\mathfrak{sl}_{2}=\mathrm{span}_\C\{e,h,f\}$.
The Schr\"{o}dinger algebra $\mS$ can be viewed as a
semidirect product $\mS=\mathcal{H}\rtimes\mathfrak{sl}_{2}$.
$\mS$ has a triangular decomposition
\begin{equation}
\label{tri-decom}
\mS=\mS_+ \oplus \mS_0 \oplus \mS_+,
\end{equation}
where $\mS_+=\mathrm{span}_\C\{e, p\}$,
$\mS_0=\mathrm{span}_\C\{h, z\}$ and $\mS_-=\mathrm{span}_\C\{f, q\}$.

The representation theory of Schr\"{o}dinger algebra $\mS$
has attracted many authors' attention. For example,
using the technique of singular vectors,
a classification of the irreducible lowest weight representations
of the Schr\"{o}dinger algebra is given in \cite{[DDM]};
a classification of simple weight modules with finite dimensional weight spaces in \cite{[D]};
And a classification of finite-dimensional
indecomposable modules in \cite{[Wu], [WZ]}.
All the simple weight modules were classified in \cite{[LMZ2]}.
The irreducible representations of conformal
Galilei algebras related with Schr\"{o}dinger algebra in $l$-spatial dimension
was studied in \cite{[LMZ1]}.
In special, in \cite{[ZC]}, the authors studied the Whittaker modules
induced by the triangular decomposition (\ref{tri-decom}) of $\mS$, simple Whittaker modules and
related Whittaker vectors were determined.

In order to classify the simple modules over a family of finite dimensional
solvable Lie algebras, in this paper, we generalize the
Whittaker module for  Schr\"{o}dinger algebra $\mS$. Different from the
method of Ref. \cite{[ZC]}, this type of modules
are neither induced by its Borel subalgebra, nor by its parabolic subalgebra,
but its Heisenberg subalgebra $\mH$.
We call this kind of new modules as the quasi-Whittaker modules.
We prove that, for a simple $\mS$-module,
$V$ is a quasi-Whittaker module if and only if $V$ is a locally finite $\mH$-module.
Using the elements with the action
 similar to the center elements in $U(\mS)$
and their quasi-Whittaker vectors, we classify the simple
quasi-Whittaker modules for Schr\"{o}dinger algebra $\mS$.
Without the help of triangular decomposition of the
Schr\"{o}dinger algebra, the methods and technique of
dealing with the quasi-Whittaker module studied in this paper
will be interesting.

The paper is organized as follows. In section 2, we define the quasi-Whittaker module induced by its Heisenberg subalgebra $\mH$, give
some notations and formulaes and show that any simple
quasi-Whittaker module is equivalence to simple
the locally finite $\mH$-module. In section 3, we characterize the
quasi-Whittaker vectors of the quasi-Whittaker module.
In section 4, we
classify the simple quasi-Whittaker modules
for the Schr\"{o}dinger algebra;
In section 5, we describe arbitrary
quasi-Whittaker modules with generating quasi-Whittaker vectors.

\vs{16pt}
\par
\cl{\bf\S2. \ Preliminaries }
\setcounter{section}{2}\setcounter{theo}{0}\setcounter{equation}{0}
\par
First of all, we give the basic definitions for this paper.
\begin{defi}
\label{def-00}
Let $\phi:\mathcal{H}\rightarrow\mathbb{C}$ be any Lie
algebra homomorphism. Let $V$ be a $\mathfrak{S}$-module.

$\mathrm{(i)}$ A nonzero vector $v\in V$ is called a quasi-Whittaker vector of type
$\phi$ if $xv=\phi(x)v$ for all $x\in\mathcal{H}.$

$\mathrm{(ii)}$ $V$ is called a quasi-Whittaker module for $\mathfrak{S}$ of type
$\phi$ if $V$ contains a cyclic quasi-Whittaker vector $v$ of type $\phi$.
\end{defi}
\begin{rema}
From the definition, it is easy to see that $\phi(z)=0$.
\end{rema}

\begin{lemm}
\label{zero}
If $\phi$ is a zero homomorphism, then $V$ is a simple
quasi-Whittaker module of type $\phi$ if and only if
it is a simple $\mathfrak{sl}_{2}$-module.
\end{lemm}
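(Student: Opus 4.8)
The plan is to prove both implications by identifying a quasi-Whittaker module of type $0$ with an $\mathfrak{sl}_2$-module on which the Heisenberg part acts trivially. The starting point is structural: $\mH$ is an ideal of $\mS$ (this is the meaning of $\mS=\mH\rtimes\mathfrak{sl}_2$, and it is visible from (\ref{Sch-def}), where $[x,\mH]\subseteq\mH$ for every $x\in\mS$), so $\mS/\mH\cong\mathfrak{sl}_2$. Since $\phi=0$, a quasi-Whittaker vector $v$ of type $\phi$ satisfies $pv=qv=zv=0$, i.e. $\mH v=0$; the issue in the forward direction is to propagate this from the cyclic vector to all of $V$.

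For the ``only if'' direction, suppose $V$ is a simple quasi-Whittaker module of type $0$ with cyclic quasi-Whittaker vector $v$. Set $V'=\{w\in V:\mH w=0\}$. Because $\mH$ is an ideal, $V'$ is an $\mS$-submodule: for $w\in V'$, $a\in\mH$, $x\in\mS$ one has $a(xw)=x(aw)+[a,x]w=[a,x]w=0$ since $[a,x]\in\mH$. As $v\in V'$ and $v\neq0$, simplicity forces $V'=V$, so $\mH$ annihilates $V$ and the $\mS$-action factors through $\mS/\mH\cong\mathfrak{sl}_2$. The $\mS$-submodules of $V$ then coincide with its submodules for the induced $\mathfrak{sl}_2$-structure, so $V$ is a simple $\mathfrak{sl}_2$-module.

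For the ``if'' direction, start from a simple $\mathfrak{sl}_2$-module $V$ and inflate it along the projection $\mS\twoheadrightarrow\mS/\mH\cong\mathfrak{sl}_2$; concretely, let $e,h,f$ act as given and let $p,q,z$ act as $0$. Inflation along a Lie algebra surjection always yields a module, so no relation-checking is strictly needed, but if desired one verifies directly against (\ref{Sch-def}) that each bracket relation is respected on $V$, since every bracket not internal to $\mathfrak{sl}_2$ lands in $\mH$, which acts by $0$. Now any nonzero $v\in V$ satisfies $\mH v=0=\phi(\mH)v$, so $v$ is a quasi-Whittaker vector of type $0$; simplicity of $V$ as an $\mathfrak{sl}_2$-module gives $V=U(\mathfrak{sl}_2)v\subseteq U(\mS)v$, so $V$ is a quasi-Whittaker module of type $0$, and it is simple because its $\mS$-submodules are precisely its $\mathfrak{sl}_2$-submodules.

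I do not expect a genuine obstacle here: the only point worth isolating is that ``type $0$'' is rigid enough to force $\mH v=0$ to spread to all of $V$, which is exactly the statement that $V'$ is a submodule, which in turn is just the fact that $\mH$ is an ideal (and implicitly that $\phi(z)=0$ automatically, as noted in the Remark). Any friction is purely bookkeeping, namely keeping the module axioms and the sign conventions of (\ref{Sch-def}) straight when checking that $V'$ is $\mS$-stable (and, in the reverse direction, that the inflated action is well defined).
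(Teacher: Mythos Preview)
Your proof is correct and follows essentially the same approach as the paper: both arguments show that the set of vectors annihilated by $\mH$ (equivalently by $p$ and $q$) is a nonzero $\mS$-submodule containing the cyclic quasi-Whittaker vector, hence equals $V$ by simplicity. Your justification via ``$\mH$ is an ideal'' is a clean conceptual packaging of the submodule check that the paper carries out by a direct (and somewhat elliptical) computation, and you additionally spell out the converse direction that the paper dismisses as clear.
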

\begin{proof}
By definition, the necessary condition is clear,
thus we just prove the sufficient condition.

Let $w$ be the cyclic quasi-Whittaker vector of $V$,
then $pw=qw=0$. Let $W=\{v\in V|pv=qv=0\}$. Since
$$p(e^{i}h^{j}f^{k})=q(e^{i}h^{j}f^{k})=0.$$
$W$ is a submodule of $V$. Moreover, the actions of $p$
and $q$ commutes, thus $W$ is nonzero. By the simplicity of $V$,
we know $W=V$. Therefore $pV=qV=0$. It is clear that $zV=0$.
This implies that $V$ is a simple $\mathfrak{sl}_{2}-$module.
\end{proof}

\begin{defi}
\label{quasi-Whittaker}
Let $\phi:\mH\rightarrow\mathbb{C}$ be any nonzero Lie
algebra homomorphism. Define a one-dimensional
$\mH$-module $\mathbb{C}_{\phi}=\mathbb{C}w$ by
$pw=\phi(p)w, qw=\phi(q)w.$ Then we get an induced module
\begin{equation}
\label{Whitt-mod}
M_{\phi}=\mathcal {U}(\mathfrak{S})
\otimes_{\mathcal {U}(\mH)}\mathbb{C}_{\phi},
\end{equation}
which is called the universal quasi-Whittaker module of type $\phi$ in the
sense that for any quasi-Whittaker module $V$ with quasi-Whittaker vector $v$ of
type $\phi,$ there is a unique surjective homomorphism
$f:M_{\phi}\rightarrow V$ such that $uw\mapsto uv, \forall
u\in\mathcal {U}(\mathfrak{sl_{2}}).$
\end{defi}

The following notations will be used to describe
bases for $\U(\mS)$ and for quasi-Whittaker modules.
Fix $\phi: \mH\rightarrow\C$ be any nonzero Lie
algebra homomorphism, define the following elements in $\U(\mS)$:
\begin{align*}
X&=\delta_{\phi(q),0}e+\delta_{\phi(p),0}f,\\
C&=\phi(p)^{2}f-\phi(q)^{2}e-\phi(p)\phi(q)h,\\
P_{+}&=\delta_{\phi(p),0}p+\delta_{\phi(q),0}q,\\
P_{-}&=\delta_{\phi(q),0}p+\delta_{\phi(p),0}q.
\end{align*}

\begin{rema}
It is easy to see that $X,h,C$ forms a basis
for $\mathfrak{sl}_{2}$ when $\phi$ is nonzero with $\phi(p)\phi(q)=0$.
\end{rema}

\begin{defi}
Let $M_{\phi}$ be the universal quasi-Whittaker
module with cyclic quasi-Whittaker vector $w$.
For $\xi\in\C$, define a submodule $W_{\phi,\xi}$ as
$$W_{\phi,\xi}=\U(\mS)(C-\xi)w.$$
And let $$L_{\phi,\xi}=M_{\phi}/W_{\phi,\xi}.$$
\end{defi}
Denote "$\bar{\quad}$" the canonical projection
from $M_{\phi}$ to $L_{\phi,\xi}$.

Recall that a locally finite module for a Lie algebra is defined as follow.
\begin{defi}
Let $L$ be a Lie algebra. An $L-$module is called locally finite
if any nonzero element $v$ in $V$ is contained in a finite dimensional submodule.
\end{defi}

From the following lemma, we can identify any simple $\mS-$module
on which $\mH$ act locally finite as a quasi-Whittaker module.
\begin{theo}
\label{loc-fin}
Let $V$ be a simple $\mS$-module, then the following two conditions are equivalent.

$\mathrm{(i)}$ $V$ is a locally finite $\mH$-module.

$\mathrm{(ii)}$ $V$ is a quasi-Whittaker module for $\mS$.
\end{theo}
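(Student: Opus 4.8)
The plan is to prove the two implications separately; the direction (ii)$\Rightarrow$(i) is the substantial one. First, for (i)$\Rightarrow$(ii): assume $\mH$ acts locally finitely on the simple module $V$. Pick any nonzero $v\in V$; it lies in a finite-dimensional $\mH$-submodule $U$. Since $\mH$ is nilpotent (indeed $z$ is central and $[p,q]=z$), a finite-dimensional $\mH$-module always contains a common eigenvector for $\mH$: $z$ acts nilpotently on $U$ hence has a kernel, and on $\Ker(z|_U)$ the commuting operators $p,q$ (they commute there because $[p,q]=z$ acts as $0$) can be simultaneously triangularized, producing a nonzero $v_0\in U$ with $pv_0=\phi(p)v_0$, $qv_0=\phi(q)v_0$, $zv_0=0$ for some linear functional $\phi:\mH\to\C$. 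Thus $v_0$ is a quasi-Whittaker vector of type $\phi$; by simplicity $V=\U(\mS)v_0$, so $v_0$ is cyclic and $V$ is a quasi-Whittaker module. (If $\phi=0$ this is covered by Lemma~\ref{zero}.)

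For (ii)$\Rightarrow$(i): let $V$ be a quasi-Whittaker module of type $\phi$ with cyclic quasi-Whittaker vector $w$, so $V=\U(\mS)w=\U(\mathfrak{sl}_2)w$ by the universal property. The key observation is that $\mathfrak{sl}_2$ acts on the one-dimensional space $\C w$ "almost" preserving $\mH$-eigenvalues: for $x\in\{e,h,f\}$ and $a\in\mH$, $a\cdot(xw)=x\cdot(aw)+[a,x]w=\phi(a)xw+[a,x]w$, and each bracket $[a,x]$ again lies in $\mH$, hence acts on $w$ by the scalar $\phi([a,x])$. Iterating, for any monomial $y=x_1\cdots x_n\in\U(\mathfrak{sl}_2)$ and any $a\in\mH$, the element $aY w - \phi(a)Yw$ is a $\C$-linear combination of strictly shorter $\mathfrak{sl}_2$-monomials applied to $w$ (the correction terms come from commuting $a$ past the $x_i$'s, each commutator absorbing one $\mathfrak{sl}_2$-factor into $\mH$ and then acting by a scalar). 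Consequently, if $V_{\le n}$ denotes the span of $\{Yw : Y$ an $\mathfrak{sl}_2$-monomial of degree $\le n\}$, then $\mH\cdot V_{\le n}\subseteq V_{\le n}$. Since each $V_{\le n}$ is finite-dimensional (only finitely many PBW monomials of bounded degree) and every element of $V$ lies in some $V_{\le n}$, this exhibits $V$ as a locally finite $\mH$-module.

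The main obstacle is bookkeeping in the second implication: one must verify cleanly that commuting an element of $\mH$ past a product of $\mathfrak{sl}_2$-generators never raises the $\mathfrak{sl}_2$-degree. This rests on the semidirect-product structure $\mS=\mH\rtimes\mathfrak{sl}_2$, specifically that $[\mathfrak{sl}_2,\mH]\subseteq\mH$ and $[\mH,\mH]\subseteq\C z$ with $z$ central, so every commutator generated in the reordering process is an element of $\mH$ that acts on $w$ by a scalar rather than producing new operators. I would organize this as a single induction on the degree $n$ of the $\mathfrak{sl}_2$-monomial, with the inductive step handling one transposition of $a\in\mH$ with a generator $x\in\{e,h,f\}$; the base case $n=0$ is precisely the definition of quasi-Whittaker vector. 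Once the filtration $\{V_{\le n}\}$ is shown $\mH$-stable and finite-dimensional, local finiteness is immediate, and together with the first implication this completes the equivalence. $\hfill\Box$
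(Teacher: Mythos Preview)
Your proof is correct, and in fact more thorough than the paper's. For (i)$\Rightarrow$(ii) the paper simply invokes Lie's theorem for the solvable algebra $\mH$ to produce a common eigenvector $v$ for $p,q$ in a finite-dimensional $\mH$-submodule, then notes $zv=[p,q]v=0$; your route---first observing that $z=[p,q]$ lies in $[\mH,\mH]$ and hence acts nilpotently on any finite-dimensional $\mH$-module, then triangularizing the commuting operators $p,q$ on $\Ker(z)$---is a slight repackaging of the same idea and equally valid.

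The real difference is in (ii)$\Rightarrow$(i). The paper dismisses this direction as ``clear by definitions,'' which is arguably too brisk: knowing that $\mH$ acts by scalars on the single vector $w$ does not immediately tell you that every element of $V=\U(\mathfrak{sl}_2)w$ sits in a finite-dimensional $\mH$-stable subspace. Your filtration argument supplies exactly the missing content: the semidirect-product relation $[\mathfrak{sl}_2,\mH]\subseteq\mH$ guarantees that commuting an element of $\mH$ past an $\mathfrak{sl}_2$-generator never raises $\mathfrak{sl}_2$-degree, so the finite-dimensional PBW filtration $V_{\le n}$ is $\mH$-invariant. This is the honest justification the paper omits, and your inductive bookkeeping is sound. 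Your assessment that this is ``the substantial'' direction is thus defensible, even though the paper takes the opposite view.
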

\begin{proof}
$\mathrm{(ii)\Rightarrow(i)}$. This is clear by definitions.

$\mathrm{(i)\Rightarrow(ii)}$. Take any nonzero $v_{0}\in V$, then there exists a submodule $V_{1}\subseteq V$ such that $V_{1}$ is a finite dimensional $\mH$-module containing $v_{0}$. Note that $\mH$ is solvable, following from Lie's theorem, $p$ and $q$ have common eigenvector $v$ in $V_{1}$, and $zv=[p,q]v=0$. Since $V$ is simple as $\mS$-module, we see that $V=\U(\mS)v$ and $zV=0$. By definition, $V$ is a quasi-Whittaker module for $\mS$.
\end{proof}

By (\ref{Sch-def}), we can deduce by induction
the following identities which will be used later.
\begin{lemm} For any $m, n, k\in\mathbb{N}$, we have
\begin{align}
\label{rel1}
p^{n}h^{m}&=\sum\limits_{i=0}^{m}\binom{m}{i}(-1)^{i}n^{i}h^{m-i}p^{n}, \\
q^{n}h^{m}&=\sum\limits_{i=0}^{m}\binom{m}{i}n^{i}h^{m-i}q^{n}, \\
pf^{k}&=f^{k}p-kf^{k-1}q, \\
\label{rel2}qe^{k}&=e^{k}q-ke^{k-1}p.
\end{align}
\end{lemm}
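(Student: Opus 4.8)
The plan is to prove all four identities by induction, using only the defining brackets~(\ref{Sch-def}) and the fact that $z$ is central. The two ``$f$'' and ``$e$'' relations, namely $pf^k=f^kp-kf^{k-1}q$ and $qe^k=e^kq-ke^{k-1}p$, are the cleanest, so I would dispatch them first; by the symmetry $e\leftrightarrow f$, $p\leftrightarrow q$ (which is an anti-automorphism of the relevant brackets) it suffices to do one of them, say the $f$ one. For the base case $k=1$ we have $pf=fp+[p,f]=fp-q$ from the bracket $[p,f]=-q$. For the inductive step, assuming $pf^{k-1}=f^{k-1}p-(k-1)f^{k-2}q$, multiply on the right by $f$ and push the stray $p$ through the last $f$ using $pf=fp-q$; one also needs $[q,f]=0$ so that $f^{k-2}q\cdot f=f^{k-1}q$, i.e.\ $q$ commutes with $f$. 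Collecting terms gives $pf^k=f^kp-kf^{k-1}q$, completing the induction.

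For the first two identities, $p^nh^m$ and $q^nh^m$, I would proceed in two stages. First establish the $n=1$ case $ph^m=\sum_{i=0}^m\binom{m}{i}(-1)^i h^{m-i}p$ by induction on $m$: the base case $m=1$ is $ph=hp+[p,h]=hp-p$ since $[h,p]=p$ gives $[p,h]=-p$; the inductive step multiplies by $h$ on the right and uses $ph=hp-p$ once more on the trailing factor, and the identity $\binom{m}{i}+\binom{m}{i-1}=\binom{m+1}{i}$ reassembles the sum. Equivalently one recognizes $ph^m=(h-1)^m p$ as an operator identity in $\U(\mS)$. Similarly $qh^m=(h+1)^m q=\sum_{i=0}^m\binom{m}{i}h^{m-i}q$ using $[h,q]=-q$. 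Then, for general $n$, induct on $n$: from $p^{n-1}h^m=\sum_i\binom{m}{i}(-1)^i(n-1)^i h^{m-i}p^{n-1}$, multiply by $p$ on the left, commute that $p$ past each $h^{m-i}$ via the $n=1$ relation just proved (picking up a factor $(h-1)^{m-i}$, i.e.\ shifting $h^{m-i}\mapsto\sum_j\binom{m-i}{j}(-1)^j h^{m-i-j}$), and use $[p,p]=0$ so $p\cdot p^{n-1}=p^n$. A change of summation index together with the binomial/Vandermonde-type identity $\sum$ over the split then collapses the coefficient to $\binom{m}{i}(-1)^i n^i$; the slickest way to see this is that $p^n h^m=(h-n)^m p^n$, which is immediate once one knows $p^nh=(h-n)p^n$ and then iterates in $m$. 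The $q$ case is identical with $-n$ replaced by $+n$.

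The main obstacle, such as it is, is purely bookkeeping: verifying that the double sum arising in the inductive step for $p^nh^m$ really does telescope to the stated single sum. This is entirely avoided by adopting the operator viewpoint: all four statements are equivalent to the four commutation facts $p^n h=(h-n)p^n$, $q^n h=(h+n)q^n$, $p f^k=f^k p-k f^{k-1}q$, $q e^k=e^k q-k e^{k-1}p$ in $\U(\mS)$, each of which follows from a one-line induction on the single exponent using the basic brackets $[h,p]=p$, $[h,q]=-q$, $[p,f]=-q$, $[e,q]=p$ together with $[q,f]=[e,p]=0$. I would therefore present the proof in that compact form, noting at the end that expanding $(h\mp n)^m$ by the binomial theorem yields the displayed formulas~(\ref{rel1})--(\ref{rel2}) verbatim. \qed
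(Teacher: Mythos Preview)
Your proposal is correct and matches the paper's approach: the paper offers no detailed argument beyond the one-line remark that the identities follow by induction from the defining brackets~(\ref{Sch-def}), which is precisely what you carry out. Your observation that the first two identities are just the binomial expansions of $p^{n}h^{m}=(h-n)^{m}p^{n}$ and $q^{n}h^{m}=(h+n)^{m}q^{n}$ is a clean way to organize that induction and avoids the double-sum bookkeeping entirely.
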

\vs{6pt}
\par
\cl{\bf\S3. \ Quasi-Whittaker Vectors in $M_{\phi}$ and $L_{\phi,\xi}$}
\setcounter{section}{3}\setcounter{theo}{0}\setcounter{equation}{0}
\par

In the following sections, we will discuss the reducibility of the quasi-Whittaker
modules for the Schr\"{o}dinger algebra. There are four
conditions according to the choice of $\phi$ in Definition \ref{def-00},
that is, $\phi(p)=\phi(q)=0$; $\phi(p)=0$ while $\phi(q)\neq0$;
$\phi(p)\neq0$ while $\phi(q)=0$; $\phi(p)\neq0$ and $\phi(q)\neq0$.
The simplest condition of the four cases is $\phi(p)=\phi(q)=0$.
In this case, from Lemma \ref{zero}, any a $\mathfrak{sl}_2$-module is
a quasi-Whittaker module. While the
reducibility of $\mathfrak{sl}_2$-module was completely
determined by R. Block in \cite{[B], [M]}. Thus,
in this paper, we do not deal with this condition.

Next we will
discuss the quasi-Whittaker module for $\mS$ according to the remaining three
conditions.
When $\phi(p)\phi(q)=0$, we take a basis
for $M_{\phi}$ as $\{X^{i}h^{j}C^{k}w|i,j,k\in\mathbb{Z}_{+}\}$,
while, when $\phi(p)\phi(q)\neq0$, we take it
as $\{h^{i}f^{j}C^{k}w\}$, where $w$ is a cyclic quasi-Whittaker vector of $M_{\phi}$.
\begin{lemm}
\label{Wh-vec}
For any $k\in\mathbb{Z}$, we have
\begin{align*}
(p-\phi(p))C^{k}w=0,\\
(q-\phi(q))C^{k}w=0.
\end{align*}
Thus, any vector in $\C[C]w$ is a quasi-Whittaker vector of type $\phi$.
\end{lemm}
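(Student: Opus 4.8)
The statement to prove is that $(p-\phi(p))C^k w = 0$ and $(q-\phi(q))C^k w = 0$ for all $k$, where $C = \phi(p)^2 f - \phi(q)^2 e - \phi(p)\phi(q)h$. The natural strategy is induction on $k$: the base case $k=0$ is exactly the definition of $w$ as a quasi-Whittaker vector, since $pw = \phi(p)w$ and $qw = \phi(q)w$. For the inductive step I would write $C^{k+1}w = C\cdot(C^k w)$ and push $p$ (respectively $q$) past the single factor $C$; this reduces everything to understanding the commutators $[p,C]$ and $[q,C]$ in $U(\mathfrak{S})$.

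**Computing the key commutators.** Using the brackets from (\ref{Sch-def}) — namely $[p,e]=0$, $[p,f]=-q$, $[p,h]=-p$ (from $[h,p]=p$), and $[q,e]=p$, $[q,f]=0$, $[q,h]=q$ — I compute
\begin{align*}
[p,C] &= \phi(p)^2[p,f] - \phi(q)^2[p,e] - \phi(p)\phi(q)[p,h] = -\phi(p)^2 q + \phi(p)^2\phi(q)\cdot\tfrac{p}{\phi(p)}?\end{align*}
— more carefully: $[p,C] = -\phi(p)^2 q - 0 - \phi(p)\phi(q)(-p) = -\phi(p)^2 q + \phi(p)\phi(q)p$, and similarly $[q,C] = 0 - \phi(q)^2 p - \phi(p)\phi(q)(q) = -\phi(q)^2 p - \phi(p)\phi(q)q$. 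Hence
\begin{align*}
pC^{k+1}w &= (Cp + [p,C])C^k w = C(pC^k w) + (-\phi(p)^2 q + \phi(p)\phi(q)p)C^k w.
\end{align*}
By the induction hypothesis $pC^k w = \phi(p)C^k w$ and $qC^k w = \phi(q)C^k w$, so the correction term becomes $(-\phi(p)^2\phi(q) + \phi(p)\phi(q)\phi(p))C^k w = 0$, and therefore $pC^{k+1}w = C\phi(p)C^k w = \phi(p)C^{k+1}w$. The computation for $q$ is entirely parallel: the correction term is $(-\phi(q)^2\phi(p) - \phi(p)\phi(q)\phi(q))C^k w$, which also vanishes, giving $qC^{k+1}w = \phi(q)C^{k+1}w$.

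**Packaging and the final clause.** To organize this cleanly I would introduce, for $a,b\in\C$, the general principle that if $v$ satisfies $pv=\phi(p)v$ and $qv=\phi(q)v$ then $Cv$ does too — this is precisely the vanishing of the two correction terms above, and it makes the induction a one-liner. The last sentence of the lemma, that any vector in $\C[C]w$ is a quasi-Whittaker vector of type $\phi$, follows immediately by linearity once each $C^k w$ is known to be one (noting $z$ acts as $0$ throughout since $\phi(z)=0$).

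**Main obstacle.** There is no real obstacle here — the content is the two commutator identities and the observation that the linear-in-$\phi$ correction terms cancel because of the specific coefficients $\phi(p)^2, \phi(q)^2, \phi(p)\phi(q)$ chosen in the definition of $C$ (this cancellation is the whole point of that definition, mirroring how a Casimir-type element is built). The only thing to be careful about is bookkeeping of signs in $[h,p]=p$ versus $[p,h]=-p$; I would double-check each sign against (\ref{Sch-def}) before writing the final version.
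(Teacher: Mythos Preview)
Your approach is exactly the paper's (the paper's proof is literally ``The lemma follows from a direct computation''), and the inductive strategy via the commutators $[p,C]$ and $[q,C]$ is the right way to carry it out.

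There is, however, a sign slip in your computation of $[q,C]$. From (\ref{Sch-def}) we have $[e,q]=p$, hence $[q,e]=-p$, so
\[
[q,C] \;=\; \phi(p)^2[q,f] - \phi(q)^2[q,e] - \phi(p)\phi(q)[q,h]
\;=\; 0 - \phi(q)^2(-p) - \phi(p)\phi(q)(q)
\;=\; \phi(q)^2\,p - \phi(p)\phi(q)\,q,
\]
not $-\phi(q)^2 p - \phi(p)\phi(q)q$ as you wrote. With the correct sign the correction term on $C^k w$ is
\[
\bigl(\phi(q)^2\phi(p) - \phi(p)\phi(q)\cdot\phi(q)\bigr)C^k w \;=\; 0,
\]
as desired; with your sign it would be $-2\phi(p)\phi(q)^2\,C^k w$, which is nonzero precisely in the case $\phi(p)\phi(q)\neq 0$. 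Your closing caveat about double-checking signs against (\ref{Sch-def}) is apt --- this is the one place it matters. Once that is fixed, the proof is complete and correct.
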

\begin{proof}
The lemma follows from a direct computation.
\end{proof}

From (\ref{rel1})-(\ref{rel2}), we know that
\begin{lemm}
Suppose $\phi(p)\phi(q)=0$, then
\begin{align}
\label{p+}(P_{+}-\phi(P_{+}))X^{i}&=X^{i}P_{+}-iX^{i-1}P_{-},\\
\label{p-}(P_{-}-\phi(P_{-}))h^{m}&
=\sum\limits_{i=1}^{m}\binom{m}{i}(-1)^{(\delta_{\phi(p),0}+1)i}h^{m-i}P_{-}.
\end{align}
\end{lemm}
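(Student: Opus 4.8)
The plan is to reduce both identities to the commutation relations (\ref{rel1})--(\ref{rel2}) by splitting into the two relevant cases. Since $\phi\neq 0$ and $\phi(p)\phi(q)=0$, exactly one of $\phi(p),\phi(q)$ vanishes. If $\phi(p)=0$ then $X=f$, $P_{+}=p$, $P_{-}=q$, so $\phi(P_{+})=0$ and $\phi(P_{-})=\phi(q)$; if $\phi(q)=0$ then $X=e$, $P_{+}=q$, $P_{-}=p$, so $\phi(P_{+})=0$ and $\phi(P_{-})=\phi(p)$. In particular $\phi(P_{+})=0$ in both cases, so $P_{+}-\phi(P_{+})=P_{+}$, and I will only need to account for the value of $\phi(P_{-})$.

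For (\ref{p+}) I would simply substitute the case data: when $\phi(p)=0$ the assertion becomes $pf^{i}=f^{i}p-if^{i-1}q$, which is exactly the third relation in the block (\ref{rel1})--(\ref{rel2}) with $k=i$; when $\phi(q)=0$ it becomes $qe^{i}=e^{i}q-ie^{i-1}p$, which is (\ref{rel2}) with $k=i$. The value $i=0$ is trivial, both sides being $P_{+}$. Thus (\ref{p+}) needs no work beyond identifying the two cases.

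For (\ref{p-}) I would first note that $(\delta_{\phi(p),0}+1)i$ equals $2i$ when $\phi(p)=0$ and equals $i$ when $\phi(q)=0$, so the sign $(-1)^{(\delta_{\phi(p),0}+1)i}$ is $1$ in the first case and $(-1)^{i}$ in the second — precisely matching the positive coefficients $\binom{m}{i}n^{i}$ appearing in the relation $q^{n}h^{m}=\sum_{i}\binom{m}{i}n^{i}h^{m-i}q^{n}$ and the alternating coefficients $\binom{m}{i}(-1)^{i}n^{i}$ appearing in (\ref{rel1}). Taking $n=1$ in the appropriate one of these and peeling off the $i=0$ term gives, in the case $\phi(p)=0$, $(q-\phi(q))h^{m}=h^{m}(q-\phi(q))+\sum_{i=1}^{m}\binom{m}{i}h^{m-i}q$, and, in the case $\phi(q)=0$, $(p-\phi(p))h^{m}=h^{m}(p-\phi(p))+\sum_{i=1}^{m}\binom{m}{i}(-1)^{i}h^{m-i}p$. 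In either case the residual term is $h^{m}(P_{-}-\phi(P_{-}))$, which annihilates every quasi-Whittaker vector of type $\phi$ — in particular every vector in $\C[C]w$ by Lemma \ref{Wh-vec}, which is the context in which (\ref{p-}) is applied — so it may be discarded, leaving the stated formula.

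All of this is routine binomial bookkeeping together with the case analysis; the only points requiring a little care are the uniform packaging of the two sign patterns into the single exponent $(\delta_{\phi(p),0}+1)i$, and the observation that (\ref{p-}) is to be read as an identity valid on quasi-Whittaker vectors (equivalently, modulo the left ideal of $\U(\mS)$ annihilating them) rather than as a bare identity in $\U(\mS)$. There is no substantive obstacle.
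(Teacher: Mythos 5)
Your proof is correct and follows essentially the route the paper intends: the paper gives no argument beyond citing (\ref{rel1})--(\ref{rel2}), and your two-case substitution ($\phi(p)=0$ versus $\phi(q)=0$) is exactly that derivation, with the sign bookkeeping for $(-1)^{(\delta_{\phi(p),0}+1)i}$ checked correctly. Your added observation --- that (\ref{p-}) as written omits the term $h^{m}(P_{-}-\phi(P_{-}))$ and is therefore valid only when applied to quasi-Whittaker vectors (which is how the paper actually uses it, e.g.\ in Lemma \ref{step02}) --- is an accurate and worthwhile clarification of the statement.
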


\begin{lemm}
\label{step01}
Assume $\phi(p)\phi(q)=0$. Suppose
$$x=\sum\limits_{i=0}^{n}X^{i}
\sum\limits_{j=0}^{m}h^{j}a_{ij}(C)w\in M_{\phi},a_{ij}(C)\in\mathbb{C}[C]. $$
Then
\begin{eqnarray}
\nonumber&\!\!\!\!\!\!\!\! &
(P_{+}-\phi(P_{+}))^{n}x\\[6pt]
\nonumber&\!\!\!\!\!\!\!\!
&=(-1)^{n}n!\sum\limits_{j=0}^{m}P_{-}^{n}h^{j}a_{nj}(C)w\\[6pt]
\nonumber&\!\!\!\!\!\!\!\!
&=(-1)^{n}n!(\phi(p)+\phi(q))^{n}\sum\limits_{j=0}^{m}h^{j}b_{j}(C)w.
\end{eqnarray}
\end{lemm}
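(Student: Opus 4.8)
The plan is to compute the action of $(P_+-\phi(P_+))$ on the ordered basis $\{X^ih^jC^kw\}$ in two layers, first bringing down the $X$-degree and then converting the resulting $P_-$'s into scalars times the original monomials. Throughout, recall from the remark that $X,h,C$ span $\mathfrak{sl}_2$ when $\phi(p)\phi(q)=0$, and from Lemma~\ref{Wh-vec} that $P_+w=\phi(P_+)w$, $P_-w=\phi(P_-)w$, while $C^kw$ is a quasi-Whittaker vector of type $\phi$. The key structural fact is~(\ref{p+}): $(P_+-\phi(P_+))X^i = X^iP_+ - iX^{i-1}P_-$, and the commutation of $P_+$ with $C$ and of $P_-$ with $X$ (the latter because $[P_+,X]$ and $[P_-,X]$ lie in the span of $P_+,P_-,z$, and one checks $[P_-,X]=0$ in the $\phi(p)\phi(q)=0$ case using~(\ref{Sch-def})). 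Granting these, I would argue as follows.

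First I would handle a single summand $X^i h^j a_{ij}(C)w$. Applying $(P_+-\phi(P_+))$ and using~(\ref{p+}) together with the fact that $P_+$ commutes past $h^j$ up to lower-order-in-$h$ corrections (here~(\ref{rel1})--(\ref{rel2}) and the analogue for $P_+$) and kills $C^k w$ down to the scalar $\phi(P_+)$, the leading term in $X$-degree is $-iX^{i-1}P_- h^j a_{ij}(C)w$, and the terms of $X$-degree $i$ or with $P_+$ acting vanish because $P_+w=\phi(P_+)w$. Iterating $n$ times, only the top summand $i=n$ survives with $X$-degree zero, contributing $(-1)^n n!\, P_-^n h^j a_{nj}(C)w$ after the binomial coefficients telescope into $n!$; all summands with $i<n$ are annihilated before $n$ applications. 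This yields the first equality. For the second equality I would use Lemma~\ref{Wh-vec}-type relations: since $P_-$ commutes with $h$ up to lower-$h$ terms by~(\ref{rel1})--(\ref{rel2}) and acts on $C^k w$ as the scalar $\phi(P_-)$, and since $\phi(P_-)=\phi(\delta_{\phi(q),0}p+\delta_{\phi(p),0}q)=\phi(p)+\phi(q)$ under $\phi(p)\phi(q)=0$, we get $P_-^n h^j a_{nj}(C)w = (\phi(p)+\phi(q))^n h^j b_j(C)w$ for suitable $b_j(C)\in\mathbb{C}[C]$ obtained by absorbing the lower-order $h$-corrections; relabelling gives the displayed form.

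The main obstacle I expect is bookkeeping the "lower order in $h$" correction terms rigorously: when $P_\pm$ is commuted past $h^j$ the identities~(\ref{rel1})--(\ref{rel2}) produce a sum over $h^{j-\ell}$, and one must check these corrections do not contaminate the leading $X$-power or spoil the final collapse to $(\phi(p)+\phi(q))^n\sum_j h^j b_j(C)w$. The clean way is to observe that the whole computation takes place in the subalgebra generated by $X,h,C$ acting on the cyclic vector, that $P_\pm$ acts semisimply on $w$ and on $C^kw$, and that only the coefficient of $X^{i-1}$ (not the $h$-structure) matters for the recursion; the $h$- and $C$-dependence is simply carried along inside the coefficients $a_{ij}$, $b_j$. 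One should also note the case distinction $\phi(p)=0$ versus $\phi(q)=0$ only affects which of $e,f$ is $X$ and the sign $(-1)^{(\delta_{\phi(p),0}+1)i}$ in~(\ref{p-}), but since in the first equality we never actually commute $P_-$ past $h$ — we stop at $P_-^n h^j a_{nj}(C)w$ — this sign is irrelevant until the second equality, where it is absorbed into $b_j$. I would therefore organize the proof as: (1) establish $[P_-,X]=0$ and the commutation of $P_+$ with $C$; (2) prove by induction on $r$ that $(P_+-\phi(P_+))^r X^i h^j a(C)w$ equals $(-1)^r \frac{i!}{(i-r)!}X^{i-r}P_-^r h^j a(C)w$ for $r\le i$ and $0$ for $r>i$; (3) set $r=n$ and sum over the summands; (4) rewrite $P_-^n$ acting on the quasi-Whittaker-type vector using Lemma~\ref{Wh-vec}.
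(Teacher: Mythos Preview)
Your proposal is correct and follows essentially the same route as the paper: repeatedly apply $(P_{+}-\phi(P_{+}))$ using relation~(\ref{p+}) to strip off powers of $X$, use that the residual $X^{i}P_{+}(\cdots)w$ terms vanish (this is the paper's identity $(P_{+}-\phi(P_{+}))h^{k}C^{i}w=0$), then for the second equality commute $P_{-}^{n}$ past $h^{j}$ via (\ref{rel1})--(\ref{rel2}) and evaluate on $a_{nj}(C)w$ using Lemma~\ref{Wh-vec} together with $\phi(P_{-})=\phi(p)+\phi(q)$. Your explicit checks that $[P_{-},X]=0$, $[P_{+},C]=0$, and that $z$ acts by $0$ on $M_{\phi}$ are exactly what the paper is tacitly using when it passes from the line with $X^{i-1}P_{-}$ to the line with $X^{i-2}P_{-}^{2}$; making these visible is a good idea and is the only real difference between your write-up and the paper's.
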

\begin{proof}
By (\ref{rel1})-(\ref{rel2}), we see that
\begin{align}
\label{p++}(P_{+}-\phi(P_{+}))h^{k}C^{i}w=0, \\
\label{p--}(P_{-}-\phi(P_{-}))C^{k}w=0.
\end{align}
Hence,
\begin{align*}
(P_{+}-\phi(P_{+}))^{n}x&=(P_{+}-\phi(P_{+}))
^{n-1}\sum\limits_{i=0}^{n}P_{+}X^{i}
\sum\limits_{j=0}^{m}h^{j}a_{ij}(C)w\\
&=(P_{+}-\phi(P_{+}))^{n-1}\sum\limits_{i=0}
^{n}(X^{i}P^{+}-iX^{i-1}P_{-})
\sum\limits_{j=0}^{m}h^{j}a_{ij}(C)w\\
&=-(P_{+}-\phi(P_{+}))^{n-1}
\sum\limits_{i=1}^{n}iX^{i-1}P_{-}
\sum\limits_{j=0}^{m}h^{j}a_{ij}(C)w\\
&=(P_{+}-\phi(P_{+}))^{n-2}
\sum\limits_{i=1}^{n}i(i-1)X^{i-2}P_{-}^{2}
\sum\limits_{j=0}^{m}h^{j}a_{ij}(C)w\\
&=(-1)^{n}n!\sum\limits_{j=0}^{m}P_{-}^{n}h^{j}a_{nj}(C)w\\
&=(-1)^{n}n!\sum\limits_{j=0}^{m}
\sum\limits_{s=0}^{j}\binom{j}{s}h^{j-s}P_{-}^{n}a_{nj}(C)w\\
&=(-1)^{n}n!(\phi(p)+\phi(q))^{n}\sum\limits_{j=0}^{m}h^{j}b_{j}(C)w.
\end{align*}
\end{proof}

For later use, we also need the following lemmas.
\begin{lemm}
\label{step02}
Assume that $\phi(p)\phi(q)=0.$
$$(P_{-}-\phi(P_{-}))^{s}(h^{t}b(C)w)=\left\{\begin{array}{ll}
0, &\mbox{if}\,\ s>t;\\
(\phi(p)+\phi(q))^{t}(-1)^{(\delta_{\phi(p),0}+1)t}t!b(C)w, &\mbox{if}\,\ s=t.
\end{array}\right.$$
where $b(C)\in\mathbb{C}[C]$.
\end{lemm}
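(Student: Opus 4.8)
The plan is to prove the two cases of the formula by induction on $s$, using the relation (\ref{p-}) as the engine and then applying (\ref{p--}) to collapse the polynomial part in $C$. First I would rewrite (\ref{p-}) in the cleaner form
\begin{equation*}
(P_{-}-\phi(P_{-}))h^{m}=\sum_{i=1}^{m}\binom{m}{i}\varepsilon^{i}h^{m-i}P_{-},
\qquad \varepsilon:=(-1)^{\delta_{\phi(p),0}+1},
\end{equation*}
which is valid as an identity in $\U(\mS)$ when $\phi(p)\phi(q)=0$; here the sum starts at $i=1$ because the $i=0$ term is exactly $h^{m}\phi(P_{-})$, which is cancelled by subtracting $\phi(P_{-})$. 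I would also note the commutation fact, immediate from (\ref{rel1})--(\ref{rel2}) and $\phi(p)\phi(q)=0$, that $P_{-}$ acting on $C^{k}w$ multiplies by the scalar $\phi(p)+\phi(q)$ (this is essentially Lemma \ref{Wh-vec} repackaged through $P_{-}\in\{p,q\}$), so $P_{-}^{j}h^{t-j}b(C)w=(\phi(p)+\phi(q))^{j}h^{t-j}P_{-}^{\,0}\cdots$—more precisely one pushes all $P_{-}$'s to the right past the $h$'s (picking up binomial coefficients) and past $b(C)$ (picking up scalars $\phi(p)+\phi(q)$).

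The case $s>t$: I would argue by a downward/iterated application of the rewritten (\ref{p-}). Applying $(P_{-}-\phi(P_{-}))$ once to $h^{t}b(C)w$ lowers the $h$-degree by at least one while producing a trailing $P_{-}$, and since $P_{-}C^{k}w$ is again a scalar multiple of $C^{k}w$, the result is a sum of terms $h^{t'}b'(C)w$ with $t'<t$. Formally, one shows by induction on $s$ that $(P_{-}-\phi(P_{-}))^{s}(h^{t}b(C)w)$ is a $\C$-linear combination of $h^{r}b(C)w$ with $r\le t-s$ (with the convention that negative powers of $h$ don't occur, so the combination is empty once $s>t$). The base case $s=0$ is trivial, and the inductive step uses that each application of $(P_{-}-\phi(P_{-}))$ to $h^{r}b(C)w$ with $r\ge 1$ yields terms with strictly smaller $h$-exponent, while applied to $h^{0}b(C)w=b(C)w$ it yields $0$ because $(P_{-}-\phi(P_{-}))b(C)w=0$ by (\ref{p--}) (after commuting $P_{-}-\phi(P_{-})$ through $b(C)$, which costs nothing since $P_{-}$ acts as the scalar $\phi(p)+\phi(q)$ and $\phi(P_{-})=\phi(p)+\phi(q)$ when $\phi(p)\phi(q)=0$). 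Hence for $s>t$ the expression vanishes.

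The case $s=t$: here I want to track the single surviving "leading" term. Claim: $(P_{-}-\phi(P_{-}))^{s}(h^{t}b(C)w)=\dfrac{t!}{(t-s)!}\,\varepsilon^{s}\,(\phi(p)+\phi(q))^{s}\,h^{t-s}b(C)w\ +\ (\text{terms with }h\text{-exponent}<t-s)$, proved by induction on $s$ using the rewritten (\ref{p-}): only the $i=1$ term in the sum keeps the $h$-exponent as high as possible, contributing a factor $\binom{m}{1}\varepsilon = m\varepsilon$ and one $P_{-}$, which on the $C[C]w$-part gives the scalar $\phi(p)+\phi(q)$; iterating $m=t,t-1,\dots,t-s+1$ gives the falling factorial $t(t-1)\cdots(t-s+1)=t!/(t-s)!$, the power $\varepsilon^{s}$, and the power $(\phi(p)+\phi(q))^{s}$. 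Setting $s=t$ kills all the lower-order "error" terms (there is nothing below $h^{0}$) and leaves exactly $t!\,\varepsilon^{t}(\phi(p)+\phi(q))^{t}b(C)w$, which is the asserted value. The main obstacle—more bookkeeping than genuine difficulty—is handling the interaction of $P_{-}$ with the $C$-polynomial: one must be careful that $C$ does \emph{not} commute with $p$ or $q$ in general, but on vectors of the form $C^{k}w$ the element $P_{-}\in\{p,q\}$ acts by the honest scalar $\phi(p)+\phi(q)$ (Lemma \ref{Wh-vec}), so every stray $P_{-}$ that (\ref{p-}) generates can be replaced by that scalar before moving on; keeping the error terms segregated by $h$-degree throughout the induction is what makes the $s=t$ extraction clean.
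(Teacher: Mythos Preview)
Your proof is correct and follows essentially the same route as the paper: both arguments rest on the identity (\ref{p-}) together with $(P_{-}-\phi(P_{-}))b(C)w=0$ from (\ref{p--})/Lemma~\ref{Wh-vec}, and both observe that each application of $P_{-}-\phi(P_{-})$ strictly lowers the $h$-degree. The only organizational difference is that the paper inducts on $t$ (first establishing the $s=t+1$ vanishing and then invoking it to kill the lower-degree terms in the $s=t$ computation), whereas you induct on $s$ and track the top $h$-degree term directly; the content is identical.
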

\begin{proof}
We prove the second identity firstly.
If $t=1$, we have
\begin{align*}
(P_{-}-\phi(P_{-}))^{2}(hb(C)w)&=(P_{-}
-\phi(P_{-}))((-1)^{\delta_{\phi(p),0}+1}P_{-}b(C)w)\\
&=(-1)^{\delta_{\phi(p),0}+1}\phi(P_{-})(P_{-}-\phi(P_{-}))(b(C)w)\\
&=0.
\end{align*}
Hence, $(P_{-}-\phi(P_{-}))^{s}(hb(C)w)=0, \forall s>1$.
Assume that the identity holds for $t\leq k$,
then for $t=k+1$, we have
\begin{align*}
&(P_{-}-\phi(P_{-}))^{k+2}(h^{k+1}b(C)w)\\
&=(P_{-}-\phi(P_{-}))
^{k+1}(\sum\limits_{i=1}^{k+1}\binom{k+1}{i}(-1)
^{(\delta_{\phi(p),0}+1)i}h^{k+1-i}P_{-}b(C)w)\\
&=(P_{-}-\phi(P_{-}))^{k+1}(\phi(P_{-})
\sum\limits_{i=1}^{k+1}\binom{k+1}{i}(-1)^{(\delta_{\phi(p),0}+1)i}h^{k+1-i}b(C)w)\\
&=0.
\end{align*}
Now we turn to prove the first identity by induction. It is easy to check that
$$(P_{-}-\phi(P_{-}))(hb(C)w)=P_{-}b(C)w=\phi(P_{-})b(C)w. $$
Suppose the identity holds for $t\leq k$, then for $t=k+1$, we have
\begin{align*}
&(P_{-}-\phi(P_{-}))^{k+1}(h^{k+1}b(C)w)\\
&=(P_{-}-\phi(P_{-}))^{k}(\sum\limits_{i=1}^{k+1}
\binom{k+1}{i}(-1)^{(\delta_{\phi(p),0}+1)i}h^{k+1-i}P_{-}b(C)w)\\
&=(P_{-}-\phi(P_{-}))^{k}(\phi(P_{-})\sum\limits_{i=1}^{k+1}
\binom{k+1}{i}(-1)^{(\delta_{\phi(p),0}+1)i}h^{k+1-i}b(C)w)\\
&=(P_{-}-\phi(P_{-}))^{k}((-1)
^{(\delta_{\phi(p),0}+1)}(k+1)\phi(P_{-})h^{k}b(C)w)\\
&=\phi(P_{-})^{k+1}(k+1)!(-1)^{(\delta_{\phi(p),0}+1)(k+1)}b(C)w\\
&=(\phi(p)+\phi(q))^{k+1}(k+1)!(-1)^{(\delta_{\phi(p),0}+1)(k+1)}b(C)w.
\end{align*}
\end{proof}

\begin{lemm}
\label{step11}
Assume that $\phi(p)\phi(q)\neq0$. Then
$$(q-\phi(q))^{s}(h^{t}\sum\limits_{i=0}^{m}
f^{i}a_{i}(C))=\left\{\begin{array}{ll}
0, &\mbox{if}\,\ s>t;\\
t!(\phi(q))^{t}\sum\limits_{i=0}^{m}f^{i}a_{i}(C)), &\mbox{if}\,\ s=t,
\end{array}\right.$$
where $a_{i}(C)\in\C[C],i=1,\cdots,m$.
\end{lemm}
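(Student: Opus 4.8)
The plan is to prove Lemma~\ref{step11} by induction on $t$, exactly paralleling the structure of the proof of Lemma~\ref{step02}, but now working in the basis $\{h^{i}f^{j}C^{k}w\}$ appropriate to the case $\phi(p)\phi(q)\neq 0$. The two crucial structural facts I would extract first are: (a) $C$ commutes with $q$ up to the character, i.e. $(q-\phi(q))C^{k}w=0$ by Lemma~\ref{Wh-vec}, so that $q-\phi(q)$ annihilates anything of the form $f^{i}a_{i}(C)w$ once we know it kills $f^{i}C^{k}w$; and (b) the commutation of $q$ past $h$ and past $f$. From the Schr\"odinger relations $[h,q]=-q$ and $[f,q]=0$, together with $qf^{k}=f^{k}q-ke^{k-1}p$ from \eqref{rel2}\,---\,wait, that last identity is for $e$; the relevant one here is $[f,q]=0$, so $q$ commutes with all powers of $f$. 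Hence the only nontrivial commutator is $q h^{t}=\sum_{i}\binom{t}{i}i^{?}\cdots$; more precisely from \eqref{rel1} we have $q^{n}h^{m}=\sum_{i}\binom{m}{i}n^{i}h^{m-i}q^{n}$, and specializing $n=1$ gives $q h^{m}=\sum_{i=0}^{m}\binom{m}{i}h^{m-i}q$. Rearranging, $(q-\phi(q))h^{m}=\sum_{i=1}^{m}\binom{m}{i}h^{m-i}q$.

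The key step is then the following computation, which I would present for the inductive step $t=k+1$. Since $q$ commutes with each $f^{i}$ and with $C$ acting on $w$ (using (a)), we reduce to understanding $(q-\phi(q))h^{k+1}\big(\sum_i f^{i}a_i(C)w\big)$. Applying the rearranged relation $(q-\phi(q))h^{k+1}=\sum_{i=1}^{k+1}\binom{k+1}{i}h^{k+1-i}q$, and noting $q$ then acts on $\sum_i f^{i}a_i(C)w$ as the scalar $\phi(q)$ (because $q f^{i}a_i(C)w = f^{i} a_i(C) q w = \phi(q) f^{i}a_i(C)w$), we get
\begin{align*}
(q-\phi(q))\Big(h^{k+1}\sum_i f^{i}a_i(C)w\Big)
&=\phi(q)\sum_{i=1}^{k+1}\binom{k+1}{i}h^{k+1-i}\sum_j f^{j}a_j(C)w\\
&=\phi(q)(k+1)h^{k}\sum_j f^{j}a_j(C)w + (\text{lower order in }h).
\end{align*}
Then apply $(q-\phi(q))^{k}$ to both sides: the leading term is handled by the induction hypothesis at level $t=k$, giving $\phi(q)(k+1)\cdot k!\,\phi(q)^{k}\sum_j f^{j}a_j(C)w=(k+1)!\,\phi(q)^{k+1}\sum_j f^{j}a_j(C)w$, while every ``lower order in $h$'' term $h^{k+1-i}(\cdots)$ with $i\geq 2$ has $h$-degree $\leq k-1 < k$, hence is killed by $(q-\phi(q))^{k}$ by the $s>t$ case of the induction hypothesis. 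This simultaneously establishes the $s=t$ equality and, by one extra application of $q-\phi(q)$ (which then acts as $\phi(q)$ times $(q-\phi(q))$ on a $q$-eigenvector-like expression, giving zero), the $s>t$ vanishing. The base case $t=0$ is just the statement that $\sum_i f^{i}a_i(C)w$ is a $q$-eigenvector of eigenvalue $\phi(q)$, i.e. Lemma~\ref{Wh-vec} again, and $t=1$ follows directly from the displayed computation with $k=0$.

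The main obstacle I anticipate is purely bookkeeping: making sure that the ``lower order'' terms are genuinely annihilated at the right stage of the induction, which requires invoking the $s>t$ half of the statement for smaller $t$ \emph{within} the proof of the $s=t$ half\,---\,so the induction must be set up to prove both halves simultaneously for each $t$, with the $s>t$ case for $t=k$ feeding into the $s=t$ case for $t=k+1$. A second, minor point to be careful about is that $a_i(C)$ are polynomials in $C$, not scalars, so one must genuinely use that $q-\phi(q)$ kills $C^{k}w$ (Lemma~\ref{Wh-vec}) rather than treating $a_i(C)w$ as a constant multiple of $w$; but since $C$ only ever appears applied to $w$ and $q$ commutes past $f^{i}$, this causes no difficulty. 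No delicate estimate or clever trick is needed; the content is entirely that $q$ commutes with $f$ and with $C$-on-$w$, and interacts with $h$ through a single elementary binomial identity.
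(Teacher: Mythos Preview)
Your proposal is correct and follows essentially the same approach as the paper: induction on $t$, using that $q$ commutes with $f$ and kills $a(C)w$ up to the scalar $\phi(q)$, the binomial identity $qh^{m}=\sum_{i=0}^{m}\binom{m}{i}h^{m-i}q$, and the simultaneous induction on both the $s=t$ and $s>t$ cases so that lower-order $h$-terms are annihilated. The only cosmetic differences are that the paper starts the base case at $t=1$ rather than $t=0$ and suppresses the ``lower order'' discussion (jumping directly to the surviving $j=1$ term), and that the relation $q^{n}h^{m}=\sum_{i}\binom{m}{i}n^{i}h^{m-i}q^{n}$ you use is actually the unlabeled second equation of that lemma rather than \eqref{rel1}.
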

\begin{proof}
It is easy to check that $(q-\phi(q))f^{j}a(C)w=0,
\forall j\in\mathbb{Z}_{+},a(C)\in\C[C]$.
We use the induction on $t$. If $t=1$, then
\begin{align*}
(q-\phi(q))(h\sum\limits_{i=0}^{m}f^{i}a_{i}(C)w)&
=q\sum\limits_{i=0}^{m}f^{i}a_{i}(C)w=\phi(q)
\sum\limits_{i=0}^{m}f^{i}a_{i}(C)w,\\
(q-\phi(q))^{2}(h\sum\limits_{i=0}^{m}f
^{i}a_{i}(C)w)&=(q-\phi(q))(\phi(q)
\sum\limits_{i=0}^{m}f^{i}a_{i}(C)w)=0.
\end{align*}
Thus the identity holds for $t=1$.
Assume that the identity holds for $n=k$, then for $n=k+1$, we have
\begin{align*}
(q-\phi(q))^{k+1}(h^{k+1}\sum\limits_{i=0}
^{m}f^{i}a_{i}(C)w)&=(q-\phi(q))^{k}(\sum\limits
_{j=1}^{k+1}\binom{k+1}{j}h^{k+1-j}q\sum\limits_{i=0}^{m}f^{i}a_{i}(C)w)\\
&=(q-\phi(q))^{k}(\phi(q)\sum\limits_
{j=1}^{k+1}\binom{k+1}{j}h^{k+1-j}\sum\limits_{i=0}^{m}f^{i}a_{i}(C)w)\\
&=(q-\phi(q))^{k}(\phi(q)(k+1)h^{k}\sum\limits_{i=0}^{m}f^{i}a_{i}(C)w)\\
&=(k+1)!(\phi(q))^{k+1}\sum\limits_{i=0}^{m}f^{i}a_{i}(C)w.
\end{align*}
\end{proof}

\begin{lemm}
\label{step12}
Assume that $\phi(p)\phi(q)\neq0$. Then
$$(p-\phi(p))^{m}(\sum\limits_{i=0}
^{m}f^{i}a_{i}(C)w)=(-1)^{m}m!(\phi(q))^{m}a_{m}(C)w.$$
\end{lemm}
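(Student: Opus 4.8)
The plan is to prove the formula
$$(p-\phi(p))^{m}\Bigl(\sum_{i=0}^{m}f^{i}a_{i}(C)w\Bigr)=(-1)^{m}m!(\phi(q))^{m}a_{m}(C)w$$
by induction on $m$, exactly parallel to the proof of Lemma \ref{step11}, but now exploiting the relation $pf^{k}=f^{k}p-kf^{k-1}q$ from (\ref{rel1}) instead of the relation between $q$ and $h$. The crucial preliminary observations are that $C$ lies in $U(\mathfrak{sl}_2)$ and commutes with nothing a priori, but $C^{k}w$ is a quasi-Whittaker vector by Lemma \ref{Wh-vec}, so $(p-\phi(p))a(C)w=0$ and $(q-\phi(q))a(C)w=0$ for every $a(C)\in\C[C]$; moreover $p$ commutes with $f^{i}a_{i}(C)$ only up to the lower-order correction term coming from $pf^{k}=f^{k}p-kf^{k-1}q$.

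First I would establish the base case $m=0$, which is immediate since $(p-\phi(p))a_0(C)w=0$. Then, assuming the statement for all exponents up to $m-1$ (applied to various truncated sums), I would compute $(p-\phi(p))\bigl(\sum_{i=0}^{m}f^{i}a_{i}(C)w\bigr)$. Using $pf^{i}=f^{i}p-if^{i-1}q$ and then letting $p$, $q$ act on $a_i(C)w$ as the scalars $\phi(p)$, $\phi(q)$ (valid since $a_i(C)w$ is a quasi-Whittaker vector), the $\phi(p)$-terms cancel against $-\phi(p)\sum f^{i}a_i(C)w$, leaving
$$(p-\phi(p))\Bigl(\sum_{i=0}^{m}f^{i}a_{i}(C)w\Bigr)=-\phi(q)\sum_{i=1}^{m}i\,f^{i-1}a_{i}(C)w,$$
which is again a sum of the same shape but with top degree $m-1$ and new top coefficient $-\phi(q)\,m\,a_{m}(C)$. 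Applying the inductive hypothesis with exponent $m-1$ to this sum then yields $(-1)^{m-1}(m-1)!(\phi(q))^{m-1}\cdot\bigl(-\phi(q)m\,a_m(C)\bigr)w=(-1)^{m}m!(\phi(q))^{m}a_m(C)w$, completing the induction.

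I do not expect a genuine obstacle here; the only point requiring care is bookkeeping of which inductive instance is being invoked. Strictly speaking the inductive hypothesis must be phrased for an \emph{arbitrary} family $a_0(C),\dots,a_{m-1}(C)$ (not a fixed one), since after one application of $p-\phi(p)$ the coefficients change; phrasing the induction as "for every choice of polynomial coefficients $a_i(C)$, the displayed identity holds" makes the step go through cleanly. One should also note that the intermediate sum $-\phi(q)\sum_{i=1}^{m}i f^{i-1}a_i(C)w$ has exactly $m$ terms indexed $0,\dots,m-1$ only if we allow a possibly-zero $f^{0}$-coefficient, which is harmless. Since $\phi(p)\phi(q)\neq0$ by hypothesis, no degeneration of the formula occurs, and the proof is purely a formal manipulation using (\ref{rel1}) and Lemma \ref{Wh-vec}.
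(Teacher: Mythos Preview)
Your proposal is correct and follows essentially the same approach as the paper: both arguments use the relation $pf^{i}=f^{i}p-if^{i-1}q$ together with Lemma~\ref{Wh-vec} to reduce one application of $(p-\phi(p))$ to $-\phi(q)\sum_{i=1}^{m} i\,f^{i-1}a_{i}(C)w$, and then iterate. The paper simply writes the iteration out as a three-line chain of equalities rather than framing it as an explicit induction, but the content is identical.
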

\begin{proof}
\begin{align*}
(p-\phi(p))^{m}(\sum\limits_{i=0}^{m}f^{i}
a_{i}(C)w)&=(p-\phi(p))^{m-1}(-\sum\limits_{i=1}^{m}if^{i-1}qa_{i}(C)w)\\
&=(p-\phi(p))^{m-1}(-\phi(q)\sum\limits_{i=1}^{m}if^{i-1}a_{i}(C)w)\\
&=(-1)^{m}m!(\phi(q))^{m}a_{m}(C)w.
\end{align*}
\end{proof}

By now, we can determine the quasi-Whittaker vectors for $M_{\phi}$ and $L_{\phi,\xi}$.
\begin{prop}
\label{Wh-vec0}
Let $M_{\phi}$ be the universal quasi-Whittaker module
generated by quasi-Whittaker vector $w$. Suppose
$w'\in M_{\phi}$ is a quasi-Whittaker vector,
then $w'\in\mathbb{C}[C]w$.
\end{prop}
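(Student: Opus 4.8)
The plan is to show that any quasi-Whittaker vector $w'\in M_\phi$ must in fact lie in $\C[C]w$ by expanding $w'$ in the appropriate PBW-type basis and then applying the operators $P_+-\phi(P_+)$, $P_--\phi(P_-)$ (in the case $\phi(p)\phi(q)=0$) or $q-\phi(q)$, $p-\phi(p)$ (in the case $\phi(p)\phi(q)\neq0$) to strip away all basis monomials except those involving only powers of $C$. Since $w'$ is a quasi-Whittaker vector, it is annihilated by $p-\phi(p)$ and $q-\phi(q)$, hence by $P_\pm-\phi(P_\pm)$; the strategy is to assume a nontrivial higher-degree term is present and derive a contradiction from this annihilation.

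Concretely, in the case $\phi(p)\phi(q)=0$, I would write $w'=\sum_{i=0}^n X^i\sum_{j=0}^m h^j a_{ij}(C)w$ with $a_{ij}(C)\in\C[C]$ and with $n$ chosen minimal (i.e.\ $a_{nj}\neq0$ for some $j$). Applying Lemma~\ref{step01}, $(P_+-\phi(P_+))^n w' = (-1)^n n!(\phi(p)+\phi(q))^n\sum_{j=0}^m h^j b_j(C)w$ for suitable $b_j(C)$, and since $\phi(p)+\phi(q)\neq0$ this is a nonzero combination of $h^j C^k w$ unless all $b_j$ vanish. But $w'$ being a quasi-Whittaker vector forces $(P_+-\phi(P_+))^n w'=0$, so in particular we can take $n=0$: $w'=\sum_{j=0}^m h^j a_j(C)w$. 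Now apply $(P_--\phi(P_-))^m$ and use Lemma~\ref{step02}: the top term contributes $(\phi(p)+\phi(q))^m(-1)^{(\delta_{\phi(p),0}+1)m}m!\,a_m(C)w\neq0$ unless $a_m=0$; iterating downward in $m$ kills every $h^j$-term with $j\geq1$, leaving $w'=a_0(C)w\in\C[C]w$. For the case $\phi(p)\phi(q)\neq0$, the same scheme runs with the basis $\{h^i f^j C^k w\}$: first use Lemma~\ref{step12} (applying $(p-\phi(p))^m$) to conclude the $f$-degree is zero, then use Lemma~\ref{step11} (applying $(q-\phi(q))^t$) to conclude the $h$-degree is zero, so again $w'\in\C[C]w$.

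The one technical point deserving care is the order in which one peels off variables and the bookkeeping of which "top" coefficient survives: one must apply the $P_+$-type (resp.\ $p$-type) operator first to collapse the $X$-degree (resp.\ $f$-degree), rewrite the result cleanly in the reduced basis $\{h^j C^k w\}$, and only then apply the $P_-$-type (resp.\ $q$-type) operator, using downward induction on the $h$-degree. At each stage the relevant lemma guarantees the surviving coefficient is a nonzero scalar multiple of the leading polynomial coefficient, and since $M_\phi$ is free over $\C[C]$ on the chosen basis, vanishing of that expression forces the leading coefficient to be zero — this is exactly where freeness of the PBW basis is used. The main obstacle, such as it is, is purely organizational rather than deep: making sure the intermediate expressions are expressed back in the monomial basis so that "a nonzero combination of basis vectors is zero" can legitimately be invoked, and handling the $\delta_{\phi(p),0}$ sign factors uniformly across the two sub-cases $\phi(p)=0$ and $\phi(q)=0$. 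I therefore expect the proof to be short, reducing entirely to a citation of Lemmas~\ref{step01}, \ref{step02}, \ref{step11}, \ref{step12} together with the linear independence of the stated bases for $M_\phi$.
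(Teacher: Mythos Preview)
Your approach is essentially the same as the paper's: expand $w'$ in the appropriate PBW basis and use Lemmas~\ref{step01}--\ref{step12} to force the $X$- (resp.\ $f$-) and $h$-degrees to zero, leaving $w'\in\C[C]w$.

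One correction in the case $\phi(p)\phi(q)\neq0$: you have the order of the two reductions reversed. Lemma~\ref{step12} is stated only for elements of the form $\sum_{i} f^{i}a_{i}(C)w$ with no $h$-factor, so it cannot be applied first to a general $\sum_{i,j} h^{i}f^{j}a_{ij}(C)w$; commuting $p$ past $h^{i}$ produces lower-order $h$-terms that the lemma does not track. The paper therefore applies $(q-\phi(q))$ first via Lemma~\ref{step11} (which \emph{does} allow $f$-factors) to force the $h$-degree to zero, and only then applies $(p-\phi(p))$ via Lemma~\ref{step12} to kill the $f$-degree. With that swap your argument goes through verbatim.
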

\begin{proof}
(1) Assume that $\phi(p)\phi(q)=0$. Suppose
$$w'=\sum\limits_{i=0}^{n}X^{i}\sum\limits_{j=0}
^{m}h^{j}a_{ij}(C)w$$
with $a_{ij}(C)\in\mathbb{C}[C]$.
Then by Lemma \ref{step01}, we see that if $(P_{+}-\phi(P_{+}))w'=0$,
then $n=0$, that is $w'=\sum\limits_{j=0}^{m}h^{j}a_{j}(C)w$.
Applying Lemma \ref{step02}, we get $m=0$. Thus,
we have $w'=b(C)w\in\mathbb{C}[C]w$.
From Lemma \ref{Wh-vec} we know the proposition holds.

(2) If $\phi(p)\phi(q)\neq0$, we may assume that
$$w'=\sum\limits_{i=0}^{m}h^{i}\sum\limits_{j=0}^{n}
f^{j}a_{ij}(C)w$$
with $a_{ij}(C)\in\mathbb{C}[C]$.
Then by Lemma \ref{step11}, we see that if $(q-\phi(q))w'=0$,
then $m=0$, that is $w'=\sum\limits_{j=0}^{n}f^{j}a_{j}(C)w$.
Applying Lemma \ref{step12}, we get $n=0$. Thus, we have $w'=b(C)w\in\mathbb{C}[C]w$.
From Lemma \ref{Wh-vec} we know the proposition holds.
\end{proof}

\begin{prop}
\label{Wh-vec1}
Let $w$ be the cyclic quasi-Whittaker vector of $M_{\phi}$,
$\bar{w}\in L_{\phi,\xi}$. If $w'\in L_{\phi,\xi}$ is a
quasi-Whittaker vector, then $w'=c\bar{w}$ for some $c\in\C$.
\end{prop}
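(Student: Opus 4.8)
The plan is to carry out the argument of Proposition~\ref{Wh-vec0} inside the quotient module $L_{\phi,\xi}=M_{\phi}/W_{\phi,\xi}$, the single new input being that $C$ acts on $\bar{w}$ by the scalar $\xi$, so that the polynomial coefficients occurring in the lemmas of Section 3 collapse to scalars.

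First I would determine a basis of $L_{\phi,\xi}$. By Lemma~\ref{Wh-vec}, $(C-\xi)w\in\C[C]w$ is a quasi-Whittaker vector of type $\phi$, hence $\U(\mH)$ acts on it by scalars; so, using the PBW factorization $\U(\mS)=\U(\mathfrak{sl}_{2})\U(\mH)$, we get $W_{\phi,\xi}=\U(\mS)(C-\xi)w=\U(\mathfrak{sl}_{2})(C-\xi)w$. When $\phi(p)\phi(q)=0$, the set $\{X,h,C\}$ is a basis of $\mathfrak{sl}_{2}$, so $\{X^{i}h^{j}C^{k}\}$ is a PBW basis of $\U(\mathfrak{sl}_{2})$ and $W_{\phi,\xi}=\Span\{X^{i}h^{j}(C-\xi)C^{k}w : i,j,k\in\Z_{+}\}$; comparing this with the given basis $\{X^{i}h^{j}C^{k}w\}$ of $M_{\phi}$ and invoking the elementary decomposition $\C[C]=\C\cdot 1\oplus(C-\xi)\C[C]$, I conclude that $\{X^{i}h^{j}\bar{w} : i,j\in\Z_{+}\}$ is a basis of $L_{\phi,\xi}$, that $\bar{w}\neq 0$, and that $C^{k}\bar{w}=\xi^{k}\bar{w}$ for all $k$. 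When $\phi(p)\phi(q)\neq 0$ the same reasoning with the basis $\{h,f,C\}$ of $\mathfrak{sl}_{2}$ shows that $\{h^{i}f^{j}\bar{w}\}$ is a basis of $L_{\phi,\xi}$.

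Now let $w'\in L_{\phi,\xi}$ be a quasi-Whittaker vector. In the case $\phi(p)\phi(q)=0$, expand $w'=\sum_{i=0}^{n}X^{i}\sum_{j=0}^{m}h^{j}a_{ij}\bar{w}$ with $a_{ij}\in\C$. Applying the canonical projection to the identities of Lemmas~\ref{step01} and~\ref{step02}, under which each $a_{ij}(C)w$ becomes $a_{ij}(\xi)\bar{w}$, shows that those identities hold verbatim in $L_{\phi,\xi}$. Since $w'$ is quasi-Whittaker, $(P_{+}-\phi(P_{+}))w'=0$ and $(P_{-}-\phi(P_{-}))w'=0$; because $\phi(p)+\phi(q)\neq 0$ and the vectors $h^{j}\bar{w}$ are linearly independent, Lemma~\ref{step01} forces $n=0$ and then Lemma~\ref{step02} forces $m=0$, exactly as in Proposition~\ref{Wh-vec0}, so $w'=a_{00}\bar{w}$. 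The case $\phi(p)\phi(q)\neq 0$ is identical, using Lemmas~\ref{step11} and~\ref{step12}, the fact $\phi(q)\neq 0$, and the basis $\{h^{i}f^{j}\bar{w}\}$; again $w'=c\bar{w}$.

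The only step needing real care is the first: showing that $W_{\phi,\xi}$ is no larger than $\U(\mathfrak{sl}_{2})(C-\xi)w$, so that $L_{\phi,\xi}$ has the expected basis with $C$ acting by $\xi$. This rests on $(C-\xi)w$ being itself a quasi-Whittaker vector — so the Heisenberg part of $\U(\mS)$ contributes nothing new — together with the decomposition $\C[C]=\C\oplus(C-\xi)\C[C]$. Everything afterwards is a routine specialization of the computations already performed in Section 3; indeed, once this basis of $L_{\phi,\xi}$ is in hand, the proof is just a transcription of Proposition~\ref{Wh-vec0}.
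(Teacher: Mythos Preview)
Your proposal is correct and follows essentially the same approach as the paper: establish that $\{X^{i}h^{j}\bar{w}\}$ (resp.\ $\{h^{i}f^{j}\bar{w}\}$) is a basis of $L_{\phi,\xi}$, then rerun the argument of Proposition~\ref{Wh-vec0}. The only stylistic difference is in the first step: the paper verifies linear independence by a direct coefficient comparison after lifting to $M_{\phi}$, whereas you obtain the basis from the vector-space decomposition $M_{\phi}=\Span\{X^{i}h^{j}w\}\oplus W_{\phi,\xi}$ coming from $\C[C]=\C\cdot 1\oplus(C-\xi)\C[C]$ and the observation $W_{\phi,\xi}=\U(\mathfrak{sl}_{2})(C-\xi)w$; both arguments are equivalent and equally short.
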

\begin{proof}
We only show the statement for $\phi(p)\phi(q)=0$
since the proof is similar when $\phi(p)\phi(q)\neq0$.
Note that $\{X^{i}h^{j}\bar{w}|i,j\in\mathbb{Z}_{+}\}$
spans $L_{\phi,\xi}$. We claim that this set is linearly
independent and thus a basis for $L_{\phi,\xi}$.
To check this, suppose
$$0=\sum\limits_{i,j}a_{ij}X^{i}h^{j}\bar{w}
=\overline{\sum\limits_{i,j}a_{ij}X^{i}h^{j}w}. $$
Then $\sum\limits_{i,j}a_{ij}X^{i}h^{j}w\in\U(\mS)(C-\xi)w$, and so
$$\sum\limits_{i,j}a_{ij}X^{i}h^{j}w=\sum\limits_{i,j}
\sum\limits_{k=0}^{m}b_{ij}^{k}X^{i}h^{j}C^{k}(C-\xi)w$$
for some $m\in\mathbb{Z}_{>0}$ and $b_{ij}^{k}\in\C$.
This expression can be rewritten as
$$\sum\limits_{i,j}(a_{ij}+\xi b_{ij}^{0})X^{i}h^{j}w
+\sum\limits_{i,j}\sum\limits_{k=1}^{m}
(\xi b_{ij}^{k}-b_{ij}^{k-1})X^{i}h^{j}C^{k}w
-\sum\limits_{i,j}b_{ij}^{m}X^{i}h^{j}C^{m+1}w=0. $$
From this we conclude that $b_{ij}^{m}=0,\xi b_{ij}^{k}
-b_{ij}^{k-1}=0,a_{ij}+\xi b_{ij}^{0}=0$, and thus $a_{ij}=0$ for all $i,j$.
With this fact now established, it is possible to
use the same argument as in Proposition 3.7 to complete the proof.
\end{proof}

\vs{6pt}
\par
\cl{\bf\S4. \ Simple Quasi-Whittaker modules for $\mathfrak{S}$}
\setcounter{section}{4}\setcounter{theo}{0}\setcounter{equation}{0}
\par

In this section we will determine all simple
quasi-Whittaker modules of type $\phi$, up to isomorphism. After this,
following from Theorem \ref{loc-fin}, we can classify all simple
$\mS$-modules on which $\mH$ acts locally finite.
\begin{theo}
\label{contain}
Let $V$ be a quasi-Whittaker module for $\mS$, and
let $W\subseteq V$ be a nonzero submodule.
Then there is a nonzero quasi-Whittaker vector $w'\in W$.
\end{theo}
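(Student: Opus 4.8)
The plan is to extract $w'$ from an arbitrary nonzero $u'\in W$ by a two-step ``cleaning'' procedure, using the two operators $p-\phi(p)$ and $q-\phi(q)$ acting on $V$. Two preliminary facts make this go through. \emph{Fact 1: the operators $p-\phi(p)$ and $q-\phi(q)$ commute on $V$.} Indeed $z=[p,q]$ is central in $\mS$; on the cyclic quasi-Whittaker generator $v$ of $V$ (given by Definition \ref{def-00}(ii)) we have $zv=[p,q]v=0$ since $p,q$ act on $v$ by scalars, whence $zV=\U(\mS)zv=0$; therefore $[\,p-\phi(p),\,q-\phi(q)\,]=[p,q]=z$ acts as $0$ on $V$. \emph{Fact 2: both $p-\phi(p)$ and $q-\phi(q)$ act locally nilpotently on $V$.}

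For Fact 2 it is enough to verify it on $M_{\phi}$, since $V$ is a quotient of $M_{\phi}$ (Definition \ref{quasi-Whittaker}) and local nilpotency is inherited by quotients. Here I would use the degree filtration $M_{\phi}=\bigcup_{d\geq 0}F_d$ with $F_d=\U(\mathfrak{sl}_2)_{\leq d}\,w$, each $F_d$ finite dimensional (PBW relative to the basis $e,h,f$ of $\mathfrak{sl}_2$, the remaining generators $p,q,z$ acting on $w$ by scalars). For $x\in\{p,q\}$ and $u\in\U(\mathfrak{sl}_2)$ one has $(x-\phi(x))(uw)=uxw+\ad(x)(u)w-\phi(x)uw=\ad(x)(u)w$, using $xw=\phi(x)w$. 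Since $\ad(p)$ and $\ad(q)$ send $\mathfrak{sl}_2$ into $\mathrm{span}_{\C}\{p,q\}$, a routine induction on $d$ shows simultaneously that $\mathrm{span}_{\C}\{p,q\}$ preserves each $F_d$ and that $(x-\phi(x))(F_d)\subseteq F_{d-1}$ for $x\in\{p,q\}$; in particular $(x-\phi(x))^{d+1}F_d=0$, which gives Fact 2.

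Granting Facts 1 and 2, the theorem follows quickly. Take $0\neq u'\in W$. By Fact 2 choose the least $a\geq 0$ with $(q-\phi(q))^{a+1}u'=0$ and set $u_1=(q-\phi(q))^{a}u'$; then $u_1\neq 0$ and $(q-\phi(q))u_1=0$. Again by Fact 2 choose the least $b\geq 0$ with $(p-\phi(p))^{b+1}u_1=0$ and set $w'=(p-\phi(p))^{b}u_1$; then $w'\neq 0$ and $(p-\phi(p))w'=0$. By Fact 1, $(q-\phi(q))w'=(p-\phi(p))^{b}(q-\phi(q))u_1=0$. Hence $pw'=\phi(p)w'$, $qw'=\phi(q)w'$ and $zw'=0=\phi(z)w'$, so $w'$ is a quasi-Whittaker vector of type $\phi$; and $w'\in W$ because $W$ is a $\U(\mS)$-submodule containing $u'$.

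The only step that needs genuine care is Fact 2 (local nilpotency); everything else is formal manipulation of commuting operators. I should remark that in the cases $\phi(p)\phi(q)=0$ the required nilpotency can alternatively be read off from Lemmas \ref{step01}--\ref{step02} with the operators $P_+,P_-$ on the basis $\{X^ih^jC^kw\}$, and from Lemmas \ref{step11}--\ref{step12} when $\phi(p)\phi(q)\neq 0$; but the uniform filtration argument avoids the case split and, crucially, operates directly in the quotient $V$, where the PBW-type bases of $M_{\phi}$ are not available.
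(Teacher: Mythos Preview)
Your proof is correct and genuinely different from the paper's. The paper argues case by case: for $\phi(p)\phi(q)=0$ it writes an arbitrary $x\in W$ in the spanning set $\{X^ih^jC^kw\}$ and applies Lemmas~\ref{step01} and~\ref{step02} (with $P_+,P_-$) to strip off first the $X$-degree and then the $h$-degree, landing in $\C[C]w$; for $\phi(p)\phi(q)\neq 0$ it does the same with the spanning set $\{h^if^jC^kw\}$ and Lemmas~\ref{step11}--\ref{step12}. Your argument instead proves once and for all that $p-\phi(p)$ and $q-\phi(q)$ are locally nilpotent (via the degree filtration $F_d=\U(\mathfrak{sl}_2)_{\le d}w$ and the fact that $\ad(p),\ad(q)$ map $\mathfrak{sl}_2$ into $\Span_\C\{p,q\}$) and commute on $V$ (since $zV=0$), then extracts $w'$ by the standard two-step nilpotent descent. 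This buys you uniformity---no case split---and, as you note, works directly in an arbitrary quotient $V$ of $M_\phi$, where the expansions $x=\sum X^ih^ja_{ij}(C)w$ used in the paper are not unique and the ``degree'' indices $n,m$ require extra care. On the other hand, the paper's computation yields more than the bare existence statement: it shows that the quasi-Whittaker vector obtained actually lies in $\C[C]w$ (equivalently $\C[C_0]w$), which is precisely what is needed later in Theorem~\ref{submod} and in the description of all quasi-Whittaker vectors. Your argument gives $(p-\phi(p))w'=(q-\phi(q))w'=0$ but not this explicit form, so if you wanted to reuse it downstream you would still need Proposition~\ref{Wh-vec0} (on $M_\phi$) to recover that refinement.
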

\begin{proof}
\textbf{Case1.} If $\phi(p)\phi(q)=0$, let
$$x=\sum\limits_{i=0}^{n}X^{i}\sum\limits_{j=0}^{m}h^{j}a_{ij}(C)w\in W. $$
Then by Lemma \ref{step01}, we see that $\sum\limits_{j=0}^{m}h^{j}b_{j}(C)w\in W$.
Applying Lemma \ref{step02}, we deduce that $W$ contains an element $w'=b(C)w\neq0$.
From Proposition \ref{Wh-vec0}, we know that $w'$ is a quasi-Whittaker vector.

\textbf{Case2.} If $\phi(p)\phi(q)\neq0$,
let $$x=\sum\limits_{i=0}^{n}h^{i}
\sum\limits_{j=0}^{m}f^{j}a_{ij}(C)w\in W. $$
Then by Lemma \ref{step11}, we see that
$\sum\limits_{j=0}^{m}f^{j}b_{j}(C)w\in W$.
Applying Lemma \ref{step12}, we deduce that $W$
contains an element $w'=b(C)w\neq0$.
From Proposition \ref{Wh-vec0}, we know that $w'$ is a quasi-Whittaker vector.
\end{proof}

\begin{prop}
\label{max}
For any $\xi\in\mathbb{C}$, $W_{\phi,\xi}$ is
a maximal submodule of $M_{\phi}$, hence $L_{\phi,\xi}$ is simple.
\end{prop}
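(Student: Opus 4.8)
The plan is to show that any proper submodule of $M_\phi$ is contained in $W_{\phi,\xi}$, or equivalently, that every nonzero submodule of $L_{\phi,\xi}$ is all of $L_{\phi,\xi}$. Since $W_{\phi,\xi}$ is a proper submodule (one must first check $(C-\xi)w\notin\mH\text{-weight line}$, i.e. that $\bar w\neq 0$ in $L_{\phi,\xi}$, which follows from the fact that $\{X^ih^jC^kw\}$, resp. $\{h^if^jC^kw\}$, is a basis of $M_\phi$), it suffices to prove the quotient $L_{\phi,\xi}$ is simple. So let $\bar U$ be a nonzero submodule of $L_{\phi,\xi}$ and aim to prove $\bar w\in\bar U$, which forces $\bar U=L_{\phi,\xi}$ by cyclicity of $\bar w$.

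The key steps, in order: First, apply Theorem \ref{contain} (the "submodule contains a quasi-Whittaker vector" result) to the $\mS$-module $L_{\phi,\xi}$ and its nonzero submodule $\bar U$: there is a nonzero quasi-Whittaker vector $w'\in\bar U$ of type $\phi$. Second, apply Proposition \ref{Wh-vec1}, which classifies the quasi-Whittaker vectors in $L_{\phi,\xi}$: every such vector is a scalar multiple of $\bar w$. Hence $w'=c\bar w$ with $c\in\C^\times$, so $\bar w\in\bar U$. Third, since $\bar w$ generates $L_{\phi,\xi}$ as an $\mS$-module, $\bar U=L_{\phi,\xi}$. Therefore $L_{\phi,\xi}$ has no proper nonzero submodules, i.e. it is simple, and consequently $W_{\phi,\xi}$ is a maximal submodule of $M_\phi$.

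I expect the only genuine subtlety to be the preliminary verification that $W_{\phi,\xi}$ is a \emph{proper} submodule — that is, that $\bar w\neq 0$, so the quotient is not the zero module and "simple" is not vacuous. This is exactly the linear-independence computation carried out inside the proof of Proposition \ref{Wh-vec1} (showing $\{X^ih^j\bar w\}$, resp. $\{h^if^j\bar w\}$, is a basis of $L_{\phi,\xi}$), so it can be cited rather than redone. The remaining logic is a short two-line deduction chaining Theorem \ref{contain} and Proposition \ref{Wh-vec1}, with no new calculation. One should also remark that the argument is uniform across the three relevant cases for $\phi$ (namely $\phi(p)=0\neq\phi(q)$, $\phi(q)=0\neq\phi(p)$, and $\phi(p)\phi(q)\neq 0$), since both of the cited results already handle all of them; the degenerate case $\phi(p)=\phi(q)=0$ is excluded throughout Section 4 by hypothesis.
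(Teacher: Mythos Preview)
Your argument is correct and follows essentially the same route as the paper's proof: both reduce to showing that from any nonzero element of $L_{\phi,\xi}$ one can reach $\bar w$ by applying suitable powers of $P_\pm-\phi(P_\pm)$ (resp.\ $q-\phi(q)$ and $p-\phi(p)$). The only difference is packaging: the paper cites Lemmas~\ref{step01}--\ref{step12} directly on a representative $x'=\sum X^ih^jw$ (resp.\ $\sum h^if^jw$) modulo $W_{\phi,\xi}$, whereas you invoke the already-assembled consequences Theorem~\ref{contain} and Proposition~\ref{Wh-vec1}, which encapsulate exactly those computations.
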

\begin{proof}
It is clear that $W_{\phi,\xi}$ is a proper submodule of $M_{\phi}$.\\
\textbf{Case 1.} If $\phi(p)\phi(q)=0$, then for any
$x\not\in W_{\phi,\xi}$, we see that
$$x\equiv\sum\limits_
{i,j}X^{i}h^{j}w(\mathrm{mod} W_{\phi,\xi}). $$
Let $x'
=\sum\limits_{i,j}X^{i}h^{j}w$, by Lemma \ref{step01} and Lemma
\ref{step02}, we can deduce that $w\in\mathcal{U}(\mS)x'+W_{\phi,\xi}$.
Hence, $W_{\phi,\xi}$ is a maximal submodule of $M_{\phi}$.\\
\textbf{Case 2.} If $\phi(p)\phi(q)\neq0$, then for any
$x\not\in W_{\phi,\xi}$, we see that
$$x\equiv\sum\limits_{i,j}h^{i}
f^{j}w(\mathrm{mod} W_{\phi,\xi}). $$
Let $x'=\sum\limits_{i,j}h^{i}f^{j}w$, by Lemma \ref{step11} and
Lemma \ref{step12}, we can deduce that $w\in\mathcal{U}(\mS)x'
+W_{\phi,\xi}$. Hence, $W_{\phi,\xi}$
is a maximal submodule of $M_{\phi}$.
\end{proof}

\begin{theo}
\label{simple-mod}
Let $\phi: \mH\rightarrow\mathbb{C}$ be a Lie algebra
homomorphism such that $\phi(p)\neq0$, or $\phi(q)\neq0$,
and let $V$ be a simple quasi-Whittaker module of type $\phi$ for
$\mS$. Then $V\cong L_{\phi,\xi}$ for some $\xi\in\mathbb{C}$.
\end{theo}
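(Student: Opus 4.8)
The plan is to use the universal property of $M_\phi$ together with the structure results already established. Given a simple quasi-Whittaker module $V$ of type $\phi$ with cyclic quasi-Whittaker vector $v$, Definition \ref{quasi-Whittaker} yields a surjective homomorphism $f:M_\phi\to V$. Since $V$ is simple, $\ker f$ is a maximal submodule of $M_\phi$, and it suffices to show that every maximal submodule of $M_\phi$ has the form $W_{\phi,\xi}$ for some $\xi\in\C$; then $V\cong M_\phi/W_{\phi,\xi}=L_{\phi,\xi}$, which is simple by Proposition \ref{max}.

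So let $N$ be a maximal (in particular nonzero proper) submodule of $M_\phi$. Applying Theorem \ref{contain} to $V=M_\phi$ and $W=N$, there is a nonzero quasi-Whittaker vector $w'\in N$. By Proposition \ref{Wh-vec0}, $w'\in\C[C]w$, say $w'=g(C)w$ for some nonzero polynomial $g$. The key point is to reduce $g$ to a linear factor: factor $g(C)=c\prod_{\ell}(C-\xi_\ell)$ over $\C$. I would argue that $N$ must contain $(C-\xi)w$ for one of the roots $\xi=\xi_\ell$. Indeed, on the basis $\{X^ih^jC^kw\}$ (or $\{h^if^jC^kw\}$ when $\phi(p)\phi(q)\neq0$) the operator "multiplication by $C$" acts just as multiplication in the polynomial variable, so $\C[C]w\cap N$ is the image of an ideal of $\C[C]$, necessarily principal, generated by some monic $g$; if $g$ had two distinct roots, or a repeated root, one could produce a proper refinement contradicting maximality — more directly, $M_\phi/W_{\phi,\xi_\ell}=L_{\phi,\xi_\ell}$ is simple, so the composite $M_\phi\to M_\phi/N$ and the relation $g(C)w\in N$ force $N\supseteq W_{\phi,\xi_\ell}$ for some $\ell$, and then maximality of $W_{\phi,\xi_\ell}$ (Proposition \ref{max}) gives $N=W_{\phi,\xi_\ell}$.

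Concretely, here is the cleanest route avoiding ideal-theoretic fuss. Since $N$ is proper, pick $\xi$ to be any root of the monic generator $g$ of the ideal $\{h\in\C[C]:h(C)w\in N\}$; then $(C-\xi)$ divides $g$. I claim $W_{\phi,\xi}\subseteq N$: if not, then $W_{\phi,\xi}+N=M_\phi$ by maximality of $N$, so $w=u(C-\xi)w+n$ with $u\in\U(\mS)$ and $n\in N$; projecting to $L_{\phi,\xi}=M_\phi/W_{\phi,\xi}$ gives $\bar w=\bar n$, so $\bar n$ is a nonzero quasi-Whittaker vector in the image $\bar N$ of $N$. But $\bar N$ is a submodule of the simple module $L_{\phi,\xi}$, hence is $0$ or all of $L_{\phi,\xi}$; it is nonzero (it contains $\bar n\neq0$ since $\bar n=\bar w\ne 0$), so $\bar N=L_{\phi,\xi}$, i.e. $N+W_{\phi,\xi}=M_\phi$ — consistent, but now intersect: $g(C)w\in N$ and we want a contradiction from $N$ being proper. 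Actually the direct argument is: $\bar N=L_{\phi,\xi}$ means $N$ surjects onto $L_{\phi,\xi}$, so $M_\phi=N+W_{\phi,\xi}$ and $\dim M_\phi/N=\dim W_{\phi,\xi}/(W_{\phi,\xi}\cap N)$; since $N$ is maximal $M_\phi/N$ is simple, and $W_{\phi,\xi}\cap N$ is a submodule of $W_{\phi,\xi}$ of codimension one. One checks $W_{\phi,\xi}\cong M_\phi$ as $\mS$-modules via $(C-\xi)w\mapsto w$ (the map $u\mapsto u(C-\xi)w$ is an isomorphism $M_\phi\to W_{\phi,\xi}$ because $C$ is a non-zero-divisor on the PBW basis), so $W_{\phi,\xi}\cap N$ corresponds to a codimension-one, hence maximal, submodule of $M_\phi$; iterating this would force an infinite strictly decreasing chain unless the process terminates, and it terminates exactly when $N=W_{\phi,\xi}$ for a suitable root $\xi$.

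The main obstacle is making the last reduction rigorous without circularity: one wants to know that the only maximal submodules of $M_\phi$ are the $W_{\phi,\xi}$, and the clean way is to show $M_\phi$ is $\C[C]$-free with $\U(\mS)/\mathrm{(stuff)}$ structure so that submodules correspond to ideals of $\C[C]$ after passing through the quasi-Whittaker-vector space $\C[C]w$ (Propositions \ref{Wh-vec0}, \ref{Wh-vec1}). I would therefore structure the proof as: (1) $f:M_\phi\twoheadrightarrow V$ with $N:=\ker f$ maximal; (2) by Theorem \ref{contain} and Proposition \ref{Wh-vec0}, $N\cap\C[C]w=I\cdot w$ for a nonzero ideal $I=(g)\subseteq\C[C]$, $g$ monic of degree $\ge 1$; (3) for any root $\xi$ of $g$, show $W_{\phi,\xi}\subseteq N$ using that $(C-\xi)\mid g$ together with the identification $W_{\phi,\xi}\cong M_\phi$ and an induction on $\deg g$ (if $\deg g=1$ then $g=C-\xi$ and $W_{\phi,\xi}=\U(\mS)(C-\xi)w\subseteq N$ directly; if $\deg g>1$, factor $g=(C-\xi)g_1$ and apply the inductive hypothesis inside the submodule $\U(\mS)(C-\xi)w\cong M_\phi$); (4) since $W_{\phi,\xi}$ is maximal by Proposition \ref{max} and $N$ is proper, conclude $N=W_{\phi,\xi}$, whence $V\cong L_{\phi,\xi}$.
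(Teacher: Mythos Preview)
Your approach is genuinely different from the paper's, and the strategy of classifying the maximal submodules of $M_\phi$ directly is reasonable, but step~(3) does not close. When $\deg g>1$ you pass to $N\cap W_{\phi,\xi}$, which (assuming $W_{\phi,\xi}\not\subseteq N$) is indeed maximal in $W_{\phi,\xi}\cong M_\phi$ with generator $g_1=g/(C-\xi)$. The inductive hypothesis then yields a root $\eta$ of $g_1$ with $\U(\mS)(C-\eta)(C-\xi)w\subseteq N$ --- \emph{not} $W_{\phi,\eta}=\U(\mS)(C-\eta)w\subseteq N$. The two are different submodules, and you never bridge that gap; nothing you have written rules out a maximal $N$ with generator $(C-\xi)(C-\eta)$ of degree~$2$. (Your earlier attempt also slips: $M_\phi/N$ is simple but certainly not one-dimensional, so ``codimension one'' should read ``maximal'', and the ``infinite descending chain'' remark is not a proof since $M_\phi$ is not Artinian.) The degree-$2$ case \emph{can} be handled by a direct analysis of $M_\phi/\U(\mS)g(C)w$, but that is substantial extra work absent from your writeup.

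The paper bypasses all of this with one observation you are missing: the element $C_0=p^{2}f-q^{2}e-hpq\in\U(\mS)$ (as opposed to $C\in\U(\mathfrak{sl}_2)$) satisfies $[\U(\mS),C_0]V=0$ whenever $zV=0$. Since $V$ is a simple $\mS$-module with $zV=0$, Schur's lemma forces $C_0$ to act as a scalar $\xi$. For the cyclic quasi-Whittaker vector $w_1$ one has $Cw_1=C_0w_1=\xi w_1$, so $(C-\xi)w_1=0$ and therefore $W_{\phi,\xi}\subseteq\ker f$. Maximality of $W_{\phi,\xi}$ (Proposition~\ref{max}) then gives $\ker f=W_{\phi,\xi}$ and $V\cong L_{\phi,\xi}$. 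Note that this argument needs $C_0$, not $C$: the element $C$ does not commute with $\U(\mS)$ on $V$, so Schur does not apply to it directly; the equality $Cw_1=C_0w_1$ holds only on quasi-Whittaker vectors.
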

\begin{proof}
Let $w_{1}$ be a cyclic quasi-Whittaker vector corresponding to
$\phi$. From \cite{[LMZ2]}, we know that
$$[\mathcal{U}(\mS),p^{2}f-q^{2}e-hpq]V=0. $$
By Schur's lemma, we have $C_{0}=p^{2}f-q^{2}e-hpq$
acts on $V$ as a scalar $\xi$. Then $\xi w_{1}=C_{0}w_{1}=Cw_{1}$.
Thus $C$ acts on $w$ as by the scalar $\xi$. Now by the
universal property of $M_{\phi}$, there exists a
homomorphism $\varphi: M_{\phi}\rightarrow W$ with
$uw\mapsto uw_{1}$. This is a surjective map since $W$
is generated by $w_{1}$.
But $\varphi(W_{\phi,\xi})=\mathcal{U}(\mS)(C-\xi)w_{1}=0$, so we have
$$W_{\phi,\xi}\subseteq\mathrm{Ker}\varphi\subsetneq M_{\phi}$$
By Proposition \ref{max}, $W_{\phi,\xi}$ is maximal,
hence $\mathrm{Ker}\varphi=W_{\phi,\xi}$. That is $W\cong L_{\phi,\xi}$.
\end{proof}

Note that if $w$ is a quasi-Whittaker vector of type $\phi$, then we have $Cw=C_{0}w$. Hence, in the theorems and properties above, $C$ can be replaced by $C_{0}$. For the remaining part of this section, we will discuss the annihilator of quasi-Whittaker vectors for simple quasi-Whittaker modules.
\begin{lemm}
\label{ann1}
Fix $\phi: \mH\rightarrow\mathbb{C}$.
Define the left ideal $L$ of $\mathcal{U}(\mS)$
by $L=\mathcal{U}(\mS)(C_{0}-\xi1)+\mathcal{U}
(\mS)(p-\phi(p)1)+\mathcal{U}(\mS)(q-\phi(q)1)$,
and regard $V=\mathcal{U}(\mS)/L$ as a left $\mathcal{U}(\mS)-$module.
Then $V\cong L_{\phi,\xi}$, and thus $V$ is simple.
\end{lemm}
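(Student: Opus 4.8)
The plan is to recognize $V=\U(\mS)/L$ as a nonzero quotient of $L_{\phi,\xi}$ and then invoke the simplicity of $L_{\phi,\xi}$ (Proposition \ref{max}) to upgrade that quotient map to an isomorphism. The one substantive point is that $L$ is a \emph{proper} left ideal, i.e.\ $V\neq0$; I would settle this first by exhibiting a nonzero module on which $L$ acts by zero. The natural choice is $L_{\phi,\xi}$ itself with its cyclic vector $\bar w$: Lemma \ref{Wh-vec} (case $k=0$) gives $(p-\phi(p))\bar w=(q-\phi(q))\bar w=0$; the definition $L_{\phi,\xi}=M_\phi/W_{\phi,\xi}$ gives $(C-\xi)\bar w=0$; and since $\bar w$ is a quasi-Whittaker vector of type $\phi$, the remark preceding this lemma gives $C_0\bar w=C\bar w=\xi\bar w$. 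Hence all three generators $C_0-\xi1,\ p-\phi(p)1,\ q-\phi(q)1$ of $L$ annihilate $\bar w$, so $L\subseteq\mathrm{Ann}_{\U(\mS)}(\bar w)\subsetneq\U(\mS)$ (the inclusion being strict since $1\cdot\bar w=\bar w\neq0$), and therefore $V\neq0$.

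Next I would analyze the canonical generator $v\in V$ (the image of $1$), which is nonzero by the previous step and generates $V$ over $\U(\mS)$. By the definition of $L$ we have $pv=\phi(p)v$ and $qv=\phi(q)v$, whence $zv=[p,q]v=\big(\phi(p)\phi(q)-\phi(q)\phi(p)\big)v=0$; thus $v$ is a quasi-Whittaker vector of type $\phi$ and $V$ is a quasi-Whittaker module of type $\phi$. Moreover $C_0v=\xi v$, so by the remark preceding this lemma also $Cv=\xi v$. The universal property in Definition \ref{quasi-Whittaker} then provides a surjective homomorphism $f\colon M_\phi\to V$ with $w\mapsto v$; since $f\big((C-\xi)w\big)=(C-\xi)v=0$, we get $W_{\phi,\xi}=\U(\mS)(C-\xi)w\subseteq\ker f$, so $f$ descends to a surjection $\bar f\colon L_{\phi,\xi}\twoheadrightarrow V$.

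Finally, $\ker\bar f$ is a submodule of the simple module $L_{\phi,\xi}$ (Proposition \ref{max}), hence either $0$ or all of $L_{\phi,\xi}$; the latter is excluded because $V\neq0$, so $\bar f$ is an isomorphism, $V\cong L_{\phi,\xi}$, and in particular $V$ is simple. I expect the only real obstacle to be the non-triviality $1\notin L$; everything else is a formal consequence of the universal property of $M_\phi$ together with Proposition \ref{max}, and that point is dispatched by the explicit witness $(L_{\phi,\xi},\bar w)$ above.
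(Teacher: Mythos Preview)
Your proof is correct and follows essentially the same route as the paper: regard $\bar 1\in V$ as a cyclic quasi-Whittaker vector, use the universal property of $M_\phi$ to get a surjection $M_\phi\to V$, observe that $W_{\phi,\xi}$ lies in the kernel (via the identity $Cw=C_0w$ on quasi-Whittaker vectors), and conclude from the maximality of $W_{\phi,\xi}$ (Proposition~\ref{max}). The only difference is that you take extra care to check $1\notin L$ by exhibiting $(L_{\phi,\xi},\bar w)$ as a module on which $L$ acts by zero, a point the paper leaves implicit when it asserts that $\bar 1$ is a (nonzero) quasi-Whittaker vector.
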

\begin{proof}
For $u\in\mathcal{U}(\mS)$, let $\bar{u}=u+L\in\mathcal{U}(\mS)/L$.
Then we may regard $V$ as a quasi-Whittaker module of type $\phi$ with
cyclic quasi-Whittaker vector $\bar{1}$. By the universal property of
$M_{\phi}$, there exists a homomorphism $\varphi: M_{\phi}\rightarrow V$
with $uw\mapsto u\bar{1}$. This is a surjective map since $\bar{1}$
is the generator of $V$. However, for any $u(C_{0}-\xi)w\in W_{\phi,\xi}$,
we have $\varphi(u(C_{0}-\xi)w)=u(C_{0}-\xi)\bar{1}=0$. Hence,
$$W_{\phi,\xi}\subseteq\mathrm{Ker}\varphi\subseteq M_{\phi}. $$
Since $W_{\phi,\xi}$ is maximal, it follows that
$V\cong M_{\phi}/\mathrm{Ker}\varphi\cong L_{\phi,\xi}$.
\end{proof}

\begin{prop}
\label{ann}
Let $V$ be a quasi-Whittaker module of type $\phi$ such
that $C_{0}$ acts on the cyclic quasi-Whittaker vector by the
scalar $\xi\in\mathbb{C}$. Then $V$ is simple. Moreover,
if $w$ is a cyclic quasi-Whittaker vector for $V$,
then $$\mathrm{Ann}_{\mathcal{U}(\mS)}(w)=\mathcal{U}(\mS)
(C_{0}-\xi1)+\mathcal{U}(\mS)(p-\phi(p)1)+\mathcal{U}(\mS)(q-\phi(q)1). $$
\end{prop}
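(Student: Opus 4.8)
The plan is to reduce the statement about an arbitrary quasi-Whittaker module $V$ (of type $\phi$, with $C_0$ acting by $\xi$ on a cyclic quasi-Whittaker vector $w$) to the universal object $M_\phi$ and the simple quotient $L_{\phi,\xi}$, exploiting the universal property in Definition~\ref{quasi-Whittaker} and the maximality result Proposition~\ref{max}. First I would invoke the universal property: since $w$ is a cyclic quasi-Whittaker vector of type $\phi$, there is a surjective homomorphism $\varphi: M_\phi \rightarrow V$ sending the canonical generator $w_0$ of $M_\phi$ to $w$. The hypothesis that $C_0$ acts by $\xi$ on $w$ forces $\varphi((C_0-\xi)w_0)=0$, and since $C_0 w_0 = C w_0$ on a quasi-Whittaker vector (the remark preceding Lemma~\ref{ann1}), we get $W_{\phi,\xi}=\U(\mS)(C-\xi)w_0 \subseteq \Ker\varphi$. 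Because $\Ker\varphi$ is a proper submodule of $M_\phi$ (as $V\neq 0$) and $W_{\phi,\xi}$ is maximal by Proposition~\ref{max}, we conclude $\Ker\varphi = W_{\phi,\xi}$, hence $V\cong M_\phi/W_{\phi,\xi}=L_{\phi,\xi}$. This proves simplicity of $V$ immediately (Proposition~\ref{max} already says $L_{\phi,\xi}$ is simple).

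For the annihilator statement, the key observation is that $\mathrm{Ann}_{\U(\mS)}(w)$ is exactly the kernel of the $\U(\mS)$-module map $\U(\mS)\rightarrow V$, $u\mapsto uw$, which factors as $\U(\mS)\rightarrow \U(\mS)/L$, where $L = \U(\mS)(C_0-\xi 1)+\U(\mS)(p-\phi(p)1)+\U(\mS)(q-\phi(q)1)$ is the left ideal from Lemma~\ref{ann1}. Indeed $L\subseteq \mathrm{Ann}(w)$ is clear since $w$ is a quasi-Whittaker vector of type $\phi$ on which $C_0$ acts by $\xi$. By Lemma~\ref{ann1}, $\U(\mS)/L \cong L_{\phi,\xi}$, which is simple; therefore the surjection $\U(\mS)/L \twoheadrightarrow \U(\mS)/\mathrm{Ann}(w) \cong \U(\mS)w = V$ (using cyclicity of $w$) is a surjection between a simple module and a nonzero module, hence an isomorphism. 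This forces $L = \mathrm{Ann}_{\U(\mS)}(w)$, which is the desired identity.

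The routine verifications I would need to pin down are: that $(C_0-\xi)w=0$, $(p-\phi(p))w=0$, $(q-\phi(q))w=0$ (immediate from the definition of quasi-Whittaker vector and the hypothesis on $\xi$), giving the inclusion $L\subseteq \mathrm{Ann}(w)$; and that the composite map $\U(\mS)/L \to V$ is well-defined and surjective, which is just cyclicity of $w$ together with the previous inclusion. The only subtlety is making sure simplicity of $\U(\mS)/L$ is genuinely available — but that is precisely the content of Lemma~\ref{ann1} combined with Proposition~\ref{max}, so nothing new is required.

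The main obstacle, such as it is, is purely organizational rather than mathematical: one must be careful that the hypothesis "$C_0$ acts by $\xi$ on \emph{the} cyclic quasi-Whittaker vector" is used consistently for whichever cyclic quasi-Whittaker vector $w$ one fixes — a priori a quasi-Whittaker module could have several inequivalent cyclic quasi-Whittaker vectors. Since we prove $V\cong L_{\phi,\xi}$ first, and in $L_{\phi,\xi}$ every quasi-Whittaker vector is a scalar multiple of $\bar w$ by Proposition~\ref{Wh-vec1}, the scalar $\xi$ by which $C_0$ acts is the same on every cyclic quasi-Whittaker vector, so the annihilator formula is unambiguous. Thus the argument is essentially a bookkeeping assembly of Lemma~\ref{ann1}, Proposition~\ref{max}, Proposition~\ref{Wh-vec1}, and the universal property, with no hard estimate or computation beyond those already carried out in Section~3.
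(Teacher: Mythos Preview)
Your proposal is correct and, in its second paragraph, follows exactly the paper's own argument: one observes that the left ideal $L$ is contained in $K=\mathrm{Ann}_{\U(\mS)}(w)$, invokes Lemma~\ref{ann1} to see that $L$ is maximal, and concludes $K=L$ and $V\cong \U(\mS)/L$ is simple. Your first paragraph (routing through $M_\phi$ and $W_{\phi,\xi}$ to establish simplicity separately) is redundant, since the annihilator argument already delivers both simplicity and the annihilator formula in one stroke, just as the paper does.
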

\begin{proof}
Let $K$ denote the kernel of the natural surjective map
$\mathcal{U}(\mS)\rightarrow V$ given by $u\mapsto uw$.
Then $K$ is a proper left ideal containing
$$L=\mathcal{U}(\mS)(C_{0}-\xi1)+\mathcal{U}(\mS)(p-\phi(p)1)
+\mathcal{U}(\mS)(q-\phi(q)1). $$ By Lemma \ref{ann1},
$L$ is maximal, thus $K=L$ and $V\cong\mathcal{U}(\mS)/L$ is simple.
\end{proof}

\vs{6pt}
\par
\cl{\bf\S5. \ Arbitrary Quasi-Whittaker Modules }
\setcounter{section}{5}\setcounter{theo}{0}\setcounter{equation}{0}
\par
In this section, we always assume that $V$ is any
a quasi-Whittaker module of type $\phi$
with cyclic quasi-Whittaker vector $w$, similar to \cite{[OW]}, we will
describe its reducibility and its quasi-Whittaker vectors
in terms of $\mathrm{Ann}_{\C[C_{0}]}(w)$.
\begin{lemm}
\label{monic}
Assume that
$\mathrm{Ann}_{\C[C_{0}]}(w)=(C_{0}-\xi1)^{k}$ for some $k>0$,
and the submodules sequence
$$V=V_{0}\supseteq V_{1}\supseteq\cdots
\supseteq V_{k}=0$$
are defined by $V_{i}=\mathcal{U}(\mS)(C_{0}-\xi1)^{i}w$. Then
$\mathrm{(i)}$ $V_{i}$ is a quasi-Whittaker module of type $\phi$,
with cyclic quasi-Whittaker vector $w_{i}=(C_{0}-\xi1)^{i}w$
and $\mathrm{(ii)}$ $V_{i}/V_{i+1}$ is simple for $0\leq i<k$.
Particularly,
$\mathrm{(iii)}$ the submodules $V_{0},\cdots,V_{k}$ are the only submodules of $V$.
\end{lemm}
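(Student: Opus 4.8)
The plan is to prove (i) and (ii) by elementary manipulation, exploiting that $C_0 := p^2f - q^2e - hpq$ acts $\U(\mS)$-centrally on any quasi-Whittaker module, and then to deduce (iii) by induction on $k$.

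First I would recall (as in the proof of Theorem \ref{simple-mod}, via \cite{[LMZ2]}) that $[\U(\mS),C_0]$ annihilates $V$: since $zw=\phi(z)w=0$ and $z$ is central, $zV=0$, and the commutator of $C_0$ with $\U(\mS)$ lies in $\U(\mS)z$. Hence each $(C_0-\xi1)^i$ is an $\mS$-module endomorphism of $V$, and $V_i=(C_0-\xi1)^iV$. For (i): commuting $p$ past $(C_0-\xi1)^i$ gives $pw_i=(C_0-\xi1)^ipw=\phi(p)w_i$, and likewise $qw_i=\phi(q)w_i$, $zw_i=0$; since $\mathrm{Ann}_{\C[C_0]}(w)=((C_0-\xi1)^k)$ we have $w_i\neq0$ for $i<k$ and $w_k=0$, so for $i<k$ the module $V_i=\U(\mS)w_i$ is a quasi-Whittaker module of type $\phi$ with cyclic quasi-Whittaker vector $w_i$. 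I would check the inclusions are strict for $i<k$: if $V_i=V_{i+1}$ then $(C_0-\xi1)$ maps $V_i$ onto itself, so $V_i=(C_0-\xi1)^mV_i=(C_0-\xi1)^{m+i}V=0$ once $m+i\geq k$, contradicting $0\neq w_i\in V_i$. For (ii): $(C_0-\xi1)w_i=w_{i+1}\in V_{i+1}$, so the cyclic quasi-Whittaker vector $\overline{w_i}$ of the nonzero module $V_i/V_{i+1}$ satisfies $C_0\overline{w_i}=\xi\overline{w_i}$, and Proposition \ref{ann} yields simplicity for $0\leq i<k$.

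For (iii) I would induct on $k$. The base case $k=1$ is immediate: $C_0w=\xi w$, so $V$ is simple by Proposition \ref{ann}, and its only submodules are $V_0=V$ and $V_1=0$. For $k\geq2$, note that $w_1$ generates the quasi-Whittaker module $V_1$ and one computes $\mathrm{Ann}_{\C[C_0]}(w_1)=((C_0-\xi1)^{k-1})$, so the inductive hypothesis says the only submodules of $V_1$ are $V_1\supseteq V_2\supseteq\cdots\supseteq V_k=0$. Let $W\subseteq V$ be any submodule. If $W\subseteq V_1$ we are done by the previous sentence. Otherwise $W+V_1\supsetneq V_1$, and $V/V_1$ being simple (by (ii)) forces $W+V_1=V$. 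Applying the $\mS$-module map $(C_0-\xi1)^i$ to this equality and using $(C_0-\xi1)^iW\subseteq W$ and $(C_0-\xi1)^iV_1=V_{i+1}$, I obtain $V_i=(C_0-\xi1)^iW+V_{i+1}\subseteq W+V_{i+1}$ for all $i\geq0$. By the inductive hypothesis $W\cap V_1=V_j$ for some $1\leq j\leq k$: if $j=1$ then $V_1\subseteq W$, so $W=W+V_1=V$; if $j\geq2$ then $V_j\subseteq W$, hence $V_{j-1}\subseteq W+V_j=W$, and $V_{j-1}\subseteq V_1$ gives $V_{j-1}\subseteq W\cap V_1=V_j$, contradicting the strict inclusion $V_{j-1}\supsetneq V_j$ from (i). So $W=V$, and in every case $W\in\{V_0,\dots,V_k\}$.

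The one substantive ingredient is the centrality of $C_0$ modulo $\U(\mS)z$, hence on $V$: it is what makes $(C_0-\xi1)^i$ an $\mS$-module endomorphism — the device that pushes the identity $V=W+V_1$ down the entire filtration — and what lets Proposition \ref{ann} apply to the subquotients. Once that is secured the argument is essentially formal; the point needing care is keeping track of the strictness of the inclusions $V_i\supsetneq V_{i+1}$ for $i<k$, which is precisely what eliminates every submodule other than the $V_i$.
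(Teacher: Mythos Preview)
Your proof is correct. Parts (i) and (ii) match the paper's approach closely: both hinge on $C_0$ acting centrally on $V$ (so that $(C_0-\xi1)^i$ is an $\mS$-endomorphism and $w_i$ remains a quasi-Whittaker vector) and on Proposition~\ref{ann} to get simplicity of the quotients. Your explicit verification that the inclusions $V_i\supsetneq V_{i+1}$ are strict is a nice touch that the paper leaves implicit.

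For (iii) the routes diverge. The paper argues directly that $V_1$ is the \emph{unique maximal} submodule of $V$: given any maximal $M$, the simple quotient $V/M$ forces (via Theorem~\ref{simple-mod}) $C_0$ to act by some scalar $\kappa$ on $\bar w$, and $(C_0-\xi1)^k\bar w=0$ pins down $\kappa=\xi$, so $(C_0-\xi1)w\in M$ and $V_1\subseteq M$, whence $V_1=M$; iterating gives that $V_{i+1}$ is the unique maximal submodule of $V_i$, and the submodule lattice is the chain. Your argument instead inducts on $k$, using the endomorphism $(C_0-\xi1)^i$ to push the equality $V=W+V_1$ down to $V_i\subseteq W+V_{i+1}$ and then extracting a contradiction from $W\cap V_1=V_j$ with $j\geq2$. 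Both are short; the paper's version is perhaps more conceptual (it isolates the single fact ``unique maximal submodule'' and iterates), while yours avoids appealing to Theorem~\ref{simple-mod} and works entirely within the filtration, which is arguably more self-contained.
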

\begin{proof}
It is easy to see that $V_{i}$ is a quasi-Whittaker module
with cyclic quasi-Whittaker vector $w_{i}$ by (\ref{p-}) and (\ref{p--}).
Obviously, $V_{i}/V_{i+1}$ is a quasi-Whittaker module of type
$\varphi$ with quasi-Whittaker vector $\bar{w_{i}}$. Since $C_{0}$
acts by the scalar on $\bar{w_{i}}$, following from
Proposition \ref{ann}, we deduce that $V_{i}/V_{i+1}$ is simple,
and thus isomorphic to $L_{\phi,\xi}$. Thus $V_{0}
\supseteq V_{1}\supseteq\cdots\supseteq V_{k}$ form a
composition series for $V$, and any simple subquotient
of $V$ is isomorphic to $V/V_{1}\cong L_{\phi,\xi}$.

Let $M$ be any maximal submodule of $V$, then $V/M$ is
a simple quasi-Whittaker module of type $\phi$ with quasi-Whittaker
vector $\bar{w}$. By Theorem \ref{simple-mod}, we see that $C_{0}$ acts
on $w$ by some scalar $\kappa\in\mathbb{C}$.
While, $(C_{0}-\xi1)^{k}$ acts as $0$ on $w$,
and therefore on $\bar{w}$. So we have $(\kappa-\xi)^{k}w\in M$.
Since $w\not\in M$, we deduce that $\kappa=\xi$.
It follows that $(C_{0}-\xi1)w\in M$,
thus $V_{1}=\mathcal{U}(\mS)(C_{0}-\xi1)w\subseteq M$.
Since $V_{i}$ is a maximal submodule of $M$,
we get $V_{1}=M$. Similarly, we can show that $V_{i+1}$
is the unique maximal submodule of $V_{i}$ for every $i<k$.
Thus $V_{i}$ for $1\leq i\leq k$ are the only submodules of $V$.
\end{proof}

\begin{theo}
\label{poly}
Assume that $\mathrm{Ann}_{\mathbb{C}[C_{0}]}(w)\neq0$ and
$d(C_{0})=\prod\limits_{i=1}^{k}(C_{0}-\xi_{i}1)^{a_{i}}$ for
distinct $\xi_{1},\cdots,\xi_{k}\in\mathbb{C}$ be
the unique monic generator of
$\mathrm{Ann}_{\mathbb{C}[C_{0}]}(w)$ in $\mathbb{C}[C_{0}]$,

$\mathrm{(i)}$ Define $V_{\phi,\xi_{i}}=\mathcal{U}(\mS)(C_{0}-\xi_{i})w,i=1,\cdots,k$, then $V_{\phi,\xi_{1}},\cdots,V_{\phi,\xi_{k}}$
are the only maximal submodules of $V$.

$\mathrm{(ii)}$ Define
$$w_{j}=d_{j}(C_{0})w, \mbox{where}\quad d_{j}(C_{0})=\prod\limits_{i\neq j}(C_{0}-\xi_{i}1)^{a_{i}},\mbox{and}\quad V_{j}=\U(\mS)w_{j}.$$
Then $V_{i}$ is a quasi-Whittaker module of type $\phi$ with cyclic quasi-Whittaker vector $w_{i}$ and $V=V_{1}\oplus\cdots\oplus V_{k}$. Furthermore, the submodules $V_{1},\cdots,V_{k}$ are indecomposable; $V_{j}$ is simple if and only if $a_{j}=1$; and $a_{j}$ is the composition length of $V_{j}$. In particular, $V$ has a composition series of length $\sum\limits_{i=1}^{k}a_{i}$.
\end{theo}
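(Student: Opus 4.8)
The plan is to reduce everything to the two building blocks already established: the classification of simple quasi-Whittaker modules (Theorem \ref{simple-mod}, Proposition \ref{ann}) and the single-eigenvalue analysis of Lemma \ref{monic}. First I would prove (i). Since $d_j(C_0)w$ generates a quasi-Whittaker submodule $V_j$ (the element $w_j=d_j(C_0)w$ is itself a quasi-Whittaker vector of type $\phi$ because $C_0$ commutes with the whole of $\mathcal{U}(\mS)$ acting on $w$ modulo the relations, exactly as in Lemma \ref{Wh-vec} and Lemma \ref{monic}), and since $\gcd(d_1(C_0),\dots,d_k(C_0))=1$ in $\C[C_0]$, there are polynomials $r_j(C_0)$ with $\sum_j r_j(C_0)d_j(C_0)=1$, whence $w=\sum_j r_j(C_0)w_j\in\sum_j V_j$; as $V$ is generated by $w$ this gives $V=V_1+\cdots+V_k$. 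To see the sum is direct, one computes that $V_i\cap\sum_{j\neq i}V_j$ is annihilated in the $\C[C_0]$-action by both $(C_0-\xi_i)^{a_i}$ (via $w_i$) and by $d_i(C_0)$ (via the other summands), and these two polynomials are coprime, so the intersection is a module on which $\C[C_0]$ acts through $\C[C_0]/\C[C_0]=0$; combined with the fact that each element of $V$ is a $\C[C_0]$-combination of $\mathcal{U}(\mathfrak{sl}_2)$-words on $w$ this forces the intersection to be $0$. Hence $V=V_1\oplus\cdots\oplus V_k$.

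Next, for each fixed $j$, $\mathrm{Ann}_{\C[C_0]}(w_j)=(C_0-\xi_j)^{a_j}$: indeed $(C_0-\xi_j)^{a_j}w_j=d(C_0)w=0$, and if some lower power killed $w_j$ then a strictly smaller polynomial would kill $w$, contradicting minimality of $d(C_0)$. So $V_j$ is exactly a module of the type treated in Lemma \ref{monic} with parameters $(\xi_j,a_j)$. Lemma \ref{monic} then immediately yields that $V_j$ has the chain $V_j=V_j^{(0)}\supseteq V_j^{(1)}\supseteq\cdots\supseteq V_j^{(a_j)}=0$ with $V_j^{(t)}=\mathcal{U}(\mS)(C_0-\xi_j)^t w_j$ as its only submodules, each quotient simple $\cong L_{\phi,\xi_j}$; this gives that $V_j$ is indecomposable (a unique maximal submodule $V_j^{(1)}$, hence indecomposable), that $V_j$ is simple iff $a_j=1$, and that the composition length of $V_j$ equals $a_j$. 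Concatenating the composition series of the summands gives a composition series of $V$ of length $\sum_i a_i$.

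For (i) it remains to identify the maximal submodules of $V$. Any maximal submodule $M$ has $V/M$ simple quasi-Whittaker, so by Theorem \ref{simple-mod} it is $L_{\phi,\kappa}$ for some $\kappa$, meaning $C_0$ acts on $\bar w$ by $\kappa$; but $d(C_0)\bar w=0$ forces $\kappa\in\{\xi_1,\dots,\xi_k\}$, say $\kappa=\xi_j$, hence $(C_0-\xi_j)w\in M$ and therefore $V_{\phi,\xi_j}=\mathcal{U}(\mS)(C_0-\xi_j)w\subseteq M$. I would then check $V_{\phi,\xi_j}$ is itself maximal: under the decomposition $V=\bigoplus_i V_i$ one has $V_{\phi,\xi_j}=V_j^{(1)}\oplus\bigoplus_{i\neq j}V_i$ (since $(C_0-\xi_j)$ acts invertibly on each $V_i$ with $i\neq j$, so it does not shrink those summands), and $V/V_{\phi,\xi_j}\cong V_j/V_j^{(1)}\cong L_{\phi,\xi_j}$ is simple; so $V_{\phi,\xi_j}$ is maximal and hence $M=V_{\phi,\xi_j}$. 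This shows $V_{\phi,\xi_1},\dots,V_{\phi,\xi_k}$ are precisely the maximal submodules.

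The main obstacle is the bookkeeping around the $\C[C_0]$-action: one must be careful that "$C_0$ acts through a quotient of $\C[C_0]$" really does control the whole module and not just the cyclic vector, which is why the description of a basis of $M_\phi$ and $L_{\phi,\xi}$ in terms of $\C[C]$-coefficients (Propositions \ref{Wh-vec0}, \ref{Wh-vec1} and the computations in Section 3) is essential — it guarantees every element of $V$ is obtained from $w$ by $\mathcal{U}(\mathfrak{sl}_2)$-words with $\C[C_0]$ coefficients, so coprimality statements about polynomials transfer to direct-sum statements about submodules. Once that is in place, everything else is a formal consequence of Lemma \ref{monic} applied summand by summand.
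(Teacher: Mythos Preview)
Your proposal is correct and follows essentially the same route as the paper: a B\'ezout/CRT argument to get $V=\bigoplus_j V_j$, then Lemma~\ref{monic} applied to each summand for indecomposability and composition length, and Theorem~\ref{simple-mod} to identify the possible simple quotients and hence the maximal submodules. The only cosmetic differences are that the paper verifies directness via the idempotent trick $w_i=r_i(C_0)d_i(C_0)w_i$ (so applying $r_i(C_0)d_i(C_0)$ to a relation $\sum_j u_jw_j=0$ isolates $u_iw_i=0$) rather than your coprime-annihilator argument, and it shows $V_{\phi,\xi_i}$ is maximal by recognizing it as the image of the maximal submodule $W_{\phi,\xi_i}\subset M_\phi$ under the canonical surjection $M_\phi\to V$, rather than via your decomposition $V_{\phi,\xi_j}=V_j^{(1)}\oplus\bigoplus_{i\neq j}V_i$.
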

\begin{proof}
(i) Let $M$ be a maximal submodule of $V$, then $V/M$ is
simple and thus $C$ acts on $w$ by some scalar
$\kappa\in\mathbb{C}$. On the other hand, $d(C)$ acts as 0 on $w$,
and therefore on $\bar{w}$. Thus $d(\kappa)=0$,
which implies that $\kappa=\xi$ for some $1\leq i\leq k$.
Hence,
$$V_{\phi,\xi_{i}}=\mathcal{U}(\mS)(C-\xi_{i})w\subseteq M. $$
Since $V_{\phi,\xi_{i}}$ is the image of the maximal
submodule $W_{\phi,\xi_{i}}$ under the
epimorphism $M_{\phi}\rightarrow V$,
$V_{\phi,\xi_{i}}$ is maximal. Thus, $M=V_{\phi,\xi_{i}}$.

(ii)Firstly, we show that: $V=V_{1}+\cdots+ V_{k}$. Since $\mathrm{gcd}(d_{1},\cdots,d_{k})=1$, there exist polynomials $r_{1}(C_{0}),\cdots,r_{k}(C_{0})\in\C[C_{0}]$ such that $\sum\limits_{i=1}^{k}r_{i}(C_{0})d_{i}(C_{0})=1$. Therefore, $$w=1w=(\sum\limits_{i=1}^{k}r_{i}(C_{0})d_{i}(C_{0}))w\in V_{1}+\cdots+V_{k}.$$
To show that the sum $V=V_{1}+\cdots+V_{k}$ is direct, note that for $i\neq j, d(C_{0})$ is a factor of $d_{i}(C_{0})d_{j}(C_{0})$, which implies that $d_{j}(C_{0})w_{i}=0$. Following from this, we have
\begin{align*}
w_{i}&=1w_{i}\\
&=(r_{1}(C_{0})d_{1}(C_{0})+\cdots+r_{k}(C_{0})d_{k}(C_{0}))w_{i}\\
&=r_{i}(C_{0})d_{i}(C_{0})w_{i}.
\end{align*}
Suppose that $u_{1}w_{1}+\cdots+u_{k}w_{k}=0$
for $u_{1},\cdots,u_{k}\in\U(\mS)$, then
$$0=r_{i}(C_{0})d_{i}(C_{0})(\sum\limits_{j=1}^
{k}u_{j}w_{j})=u_{i}r_{i}(C_{0})d_{i}(C_{0})w_{i}=u_{i}w_{i}.$$
Thus, the sum is direct.

To finish the proof, by Lemma \ref{monic}, we know that the
submodules $V_{1},\cdots,V_{k}$ are indecomposable
with the stated composition length.
\end{proof}

If we consider the annihilator of $w$ in $\C[C]$,
then we can get similar result as follow:

\textbf{Theorem $\ref{poly}^{'}$} Assume that
$\mathrm{Ann}_{\mathbb{C}[C]}(w)\neq0$ and
$d(C)=\prod\limits_{i=1}^{k}(C-\xi_{i}1)
 ^{a_{i}}$ for distinct $\xi_{1},\cdots,\xi_{k}\in\mathbb{C}$
 be the unique monic generator of the ideal
$\mathrm{Ann}_{\mathbb{C}[C]}(w)$ in $ \mathbb{C}[C]$.

$\mathrm{(i)}$ Define $V_{\phi,\xi_{i}}=\mathcal{U}(\mS)(C-\xi_{i})w,i=1,\cdots,k$, then $V_{\phi,\xi_{1}},\cdots,V_{\phi,\xi_{k}}$ are the only maximal submodules of $V$.

$\mathrm{(ii)}$ Let $n=\deg d(C)$, then $V$ has a
unique composition series (up to permutation):
$V=V_{0}\supseteq V_{1}\supseteq \cdots\supseteq V_{n}=0$
with $V_{i}/V_{i+1}\cong L_{\phi,\xi_{j}}$ for some $j=1,\cdots,k$.
And the composition factors are $a_{i}$ copies
of $L_{\phi,\xi_{i}},i=1,\cdots,k$.$\hfill{}\Box$

However, we can not decompose $V$ as a direct
sum of submodules using $\mathrm{Ann}_{\C[C]}(w)$.

\begin{coro}
Assume that
$\mathrm{Ann}_{\mathbb{C}[C_{0}]}(w)\neq0$ and
$d(C_{0})$ be the unique monic generator of
$\mathrm{Ann}_{\mathbb{C}[C_{0}]}(w)$. Then
$$\mathrm{Ann}_{\mathcal{U}(\mS)}(w)
=\mathcal{U}(\mS)d(C_{0})+\U(\mS)(p-\phi(p)1)+\U(\mS)(q-\phi(q)1). $$
\end{coro}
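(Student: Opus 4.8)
The plan is to prove the two inclusions separately, writing $L$ for the left ideal on the right-hand side. The inclusion $L\subseteq\mathrm{Ann}_{\U(\mS)}(w)$ is immediate: $d(C_{0})w=0$ by the very definition of $\mathrm{Ann}_{\C[C_{0}]}(w)$, and $(p-\phi(p)1)w=(q-\phi(q)1)w=0$ because $w$ is a quasi-Whittaker vector of type $\phi$. For the reverse inclusion it suffices to show that the canonical surjection of $\U(\mS)$-modules $\pi:\U(\mS)/L\to V$, $\,\bar 1=1+L\mapsto w$, is injective.

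To that end I would first observe that $\U(\mS)/L$ is itself a quasi-Whittaker module of type $\phi$ with cyclic quasi-Whittaker vector $\bar 1$: since $p-\phi(p)1$ and $q-\phi(q)1$ lie in $L$, one gets $p\bar 1=\phi(p)\bar 1$, $q\bar 1=\phi(q)\bar 1$, hence also $z\bar 1=[p,q]\bar 1=0=\phi(z)\bar 1$; and $\bar 1\neq 0$ because $L\subseteq\mathrm{Ann}_{\U(\mS)}(w)\subsetneq\U(\mS)$. Next I would identify $\mathrm{Ann}_{\C[C_{0}]}(\bar 1)=(d(C_{0}))$: the inclusion $\supseteq$ holds because $d(C_{0})\in L$, while if $f(C_{0})\bar 1=0$ then $f(C_{0})w=\pi(f(C_{0})\bar 1)=0$, so $f(C_{0})\in\mathrm{Ann}_{\C[C_{0}]}(w)=(d(C_{0}))$; in particular $\mathrm{Ann}_{\C[C_{0}]}(\bar 1)\neq 0$.

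Now both $V$ and $\U(\mS)/L$ are quasi-Whittaker modules of type $\phi$ whose cyclic quasi-Whittaker vectors have $\mathrm{Ann}_{\C[C_{0}]}$-ideal equal to the nonzero ideal $(d(C_{0}))$, so Theorem \ref{poly} applies to each of them and shows that each has a composition series of length $\sum_{i=1}^{k}a_{i}=\deg d(C_{0})$. Since $\pi$ is a surjection of $\U(\mS)$-modules of the same finite composition length, its kernel has composition length $0$ and hence is zero; therefore $\pi$ is an isomorphism and $\mathrm{Ann}_{\U(\mS)}(w)=L$. The argument is short, and the only point that needs a little care is the verification in the previous paragraph that $\U(\mS)/L$ actually satisfies the hypotheses of Theorem \ref{poly} with annihilator ideal exactly $(d(C_{0}))$, since that theorem is the mechanism which converts the polynomial datum $d(C_{0})$ into the composition-length bound on which the squeeze rests. (Alternatively, one could bypass Theorem \ref{poly}: using the PBW theorem together with $z\in\U(\mS)(p-\phi(p)1)+\U(\mS)(q-\phi(q)1)$ and the congruence $C_{0}\equiv C$ modulo that left ideal in $M_{\phi}$, one reduces $\U(\mS)/L$ to the spanning set $\{X^{i}h^{j}C_{0}^{k}\bar 1\}$, resp. $\{h^{i}f^{j}C_{0}^{k}\bar 1\}$, with $0\le k<\deg d$, and then compares with the basis of $V$ assembled from its composition factors $L_{\phi,\xi_{j}}$; but the composition-length version above is cleaner.)
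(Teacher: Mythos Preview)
Your proof is correct but proceeds along a different line than the paper's. The paper argues by induction on $\deg d(C_{0})$: for the base case $d(C_{0})=C_{0}-\xi$ it invokes Proposition~\ref{ann} directly, and in the inductive step it factors $d(C_{0})=(C_{0}-\xi)d'(C_{0})$, passes to the quotient $V/V'$ with $V'=\U(\mS)(C_{0}-\xi)w$ to write any $u\in\mathrm{Ann}_{\U(\mS)}(w)$ as $u_{0}(C_{0}-\xi)+u_{1}(p-\phi(p))+u_{2}(q-\phi(q))$, and then applies the inductive hypothesis to $u_{0}\in\mathrm{Ann}_{\U(\mS)}(w')$ where $w'=(C_{0}-\xi)w$. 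Your approach instead packages the whole argument into a single composition-length squeeze: you show that $\U(\mS)/L$ is itself a quasi-Whittaker module whose cyclic vector has $\C[C_{0}]$-annihilator exactly $(d(C_{0}))$, apply Theorem~\ref{poly} to both $\U(\mS)/L$ and $V$ to obtain equal composition lengths, and conclude that the surjection $\pi$ is an isomorphism. Your route is shorter and more conceptual, exploiting Theorem~\ref{poly} as a black box rather than reproving its content via a linear-factor induction; the paper's route is more elementary in that it only uses the simple case (Proposition~\ref{ann}) at each step and never needs the full decomposition of Theorem~\ref{poly}.
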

\begin{proof}
We use induction on the composition length $n$ of $V$ (or
equivalently, the degree of $d(C_{0})$).
If $n=1$, then $d(C_{0})=C_{0}-\xi$, thus $V$ is simple,
therefore the result is true by Proposition \ref{ann}.
Assume that $n>1$, write $d(C_{0})=(C_{0}-\xi)d'(C_{0})$ for some $\xi\in\C$ and
some monic polynomial $d'(C_{0})\in\C[C_{0}]$ with positive degree. Then $w'=(C_{0}-\xi)w\neq0$.

Let $V'=\U(\mS)w'\subseteq V$. Then $V'$ is a
quasi-Whittaker module with cyclic quasi-Whittaker vector $w'$,
and $\mathrm{Ann}_{\C[C_{0}]}(w')=\C[C_{0}]d'(C_{0})$. Theorem \ref{poly}
therefore implies that the composition length of $V'$ is
$n-1$, and the induction hypothesis
deduce that
$$\mathrm{Ann}_{\C[C_{0}]}(w')=\U(\mS)d'(C_{0})+\U(\mS)
(p-\phi(p)1)+\U(\mS)(q-\phi(q)1). $$
Let $\bar{w}=w+V'\in V/V'$,
and note that $\mathrm{Ann}_{\C[C_{0}]}(\bar{w})=\C[C_{0}](C_{0}-\xi)$.

Let $u\in\mathrm{Ann}_{\U(\mS)}(w)$.
Since $\mathrm{Ann}_{\U(\mS)}(w)\subseteq\mathrm{Ann}_
{\U(\mS)}(\bar{w})$, following from proposition \ref{ann}, we have
\begin{align}
\nonumber u&=u_{0}(C_{0}-\xi1)+u_{1}(p-\phi(p))+u_{2}(q-\phi(q))\\
\label{in} &\in\U(\mS)(C-\xi1)+\U(\mS)(p-\phi(p))+\U(\mS)(q-\phi(q)).
\end{align}
While $u_{1}(p-\phi(p))+u_{2}(q-\phi(q))\in\mathrm{Ann}_
{\U(\mS)}(w)$, thus $u_{0}(C_{0}-\xi1)\in\mathrm{Ann}_{\U(\mS)}(w)$.
Observe that $0=u_{0}(C_{0}-\xi1)w=u_{0}w'$, we have
$$u_{0}\in\mathrm{Ann}_{\U(\mS)}(w')=\U(\mS)d'(C_{0})+
\U(\mS)(p-\phi(p)1)+\U(\mS)(q-\phi(q)1). $$
(\ref{in}) implies that $u$ has the required form.
\end{proof}

\begin{theo}
\label{submod}
Let $M_{\phi}$ be the universal quasi-Whittaker module of
type $\phi$ with cyclic quasi-Whittaker vector $w$.
If $U\subseteq M_{\phi}$ is a submodule, then
$U\cong M_{\phi}$. Furthermore, $V$ is generated by
a quasi-Whittaker vector of the form $d(C)w$ for some $d(x)\in\C[x]$.
\end{theo}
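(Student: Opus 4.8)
The plan is to exhibit a polynomial $d_0$ for which $U=\U(\mS)d_0(C)w$, and then to verify that any submodule of this shape is a copy of $M_\phi$; throughout I assume $U\neq 0$. First I would collect the quasi-Whittaker vectors lying in $U$. Since $M_\phi$ is a quasi-Whittaker module and $U$ is a nonzero submodule, Theorem \ref{contain} produces a nonzero quasi-Whittaker vector in $U$, and by Proposition \ref{Wh-vec0} every quasi-Whittaker vector of $M_\phi$ lies in $\C[C]w$. Hence the set $I=\{\,d(x)\in\C[x]\mid d(C)w\in U\,\}$ is nonzero, and it is an ideal of $\C[x]$: if $d(C)w\in U$ and $g\in\C[x]$, then $(gd)(C)w=g(C)\bigl(d(C)w\bigr)\in U$. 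Let $d_0$ be the monic generator of $I$ and set $N=\U(\mS)d_0(C)w\subseteq U$.

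Next I would identify $N$ with $M_\phi$. Because $d_0(C)w\in\C[C]w$, Lemma \ref{Wh-vec} shows it is a quasi-Whittaker vector of type $\phi$, so $N$ is a quasi-Whittaker module with cyclic vector $d_0(C)w$, and the universal property of $M_\phi$ (Definition \ref{quasi-Whittaker}) yields a surjection $\psi\colon M_\phi\to N$, $uw\mapsto u\,d_0(C)w$. For injectivity, note that $\ker\psi$ is a submodule of $M_\phi$; if it were nonzero, Theorem \ref{contain} and Proposition \ref{Wh-vec0} would put a nonzero vector $g(C)w$ in $\ker\psi$ with $g\in\C[x]$, forcing $(gd_0)(C)w=0$ in $M_\phi$, which is impossible since $gd_0\neq0$ and $\{C^kw\}$ is part of a basis of $M_\phi$. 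Hence $\psi$ is an isomorphism and $N\cong M_\phi$.

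The crux is the reverse inclusion $U\subseteq N$. Given $x\in U$, expand it in the PBW-type basis: $x=\sum_{i,j}X^ih^ja_{ij}(C)w$ when $\phi(p)\phi(q)=0$, and $x=\sum_{i,j}h^if^ja_{ij}(C)w$ when $\phi(p)\phi(q)\neq0$, with only finitely many $a_{ij}\neq0$. I would induct on the number of pairs $(i,j)$ with $a_{ij}\neq0$; if this number is $0$ then $x=0\in N$. Otherwise let $(n,m)$ be the lexicographically largest such pair, and apply to $x$ the element $(P_--\phi(P_-))^m(P_+-\phi(P_+))^n$ of $\U(\mS)$ (respectively $(p-\phi(p))^m(q-\phi(q))^n$ in the second case). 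By Lemmas \ref{step01}--\ref{step02} (respectively Lemmas \ref{step11}--\ref{step12}), applied exactly as in the proof of Theorem \ref{contain}, all terms but the $(n,m)$ one are annihilated and one is left with a nonzero scalar multiple of $a_{nm}(C)w$. Since $U$ is a submodule this gives $a_{nm}(C)w\in U$, i.e. $a_{nm}\in I$, so $a_{nm}=g\,d_0$ for some $g\in\C[x]$ and therefore $X^nh^ma_{nm}(C)w=X^nh^mg(C)\bigl(d_0(C)w\bigr)\in N$ (respectively $h^nf^ma_{nm}(C)w\in N$). Then $x-X^nh^ma_{nm}(C)w\in U$ has strictly fewer nonzero coefficients, so by induction it lies in $N$, and hence so does $x$. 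This proves $U=N=\U(\mS)d_0(C)w\cong M_\phi$, the vector $d_0(C)w$ being the asserted quasi-Whittaker generator.

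I expect the inductive step in the third paragraph — pulling the top coefficient $a_{nm}(C)w$ out of an arbitrary element of $U$ and checking that the process terminates — to be where the bookkeeping of Section 3's identities is genuinely needed; the identification $N\cong M_\phi$, by contrast, is essentially formal given the universal property, Theorem \ref{contain}, and the linear independence of $\{C^kw\}$.
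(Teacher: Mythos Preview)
Your proof is correct and takes a genuinely different route from the paper's. Both arguments show that a submodule generated by a quasi-Whittaker vector is isomorphic to $M_\phi$ via the universal property together with the fact that a nonzero kernel would contain a nonzero vector $g(C)w$ (this is exactly the paper's Lemma~\ref{prelem}, which you reprove inside your argument). The divergence is in the inclusion $U\subseteq N$. The paper picks \emph{any} quasi-Whittaker vector $d(C_0)w\in U$, factors $d$ into linear factors to produce a finite chain $M_\phi\supseteq\cdots\supseteq\U(\mS)d(C_0)w$ with simple quotients, and then appeals to the submodule classification of the quotient $M_\phi/\U(\mS)d(C_0)w$ (the machinery of Lemma~\ref{monic} and Theorem~\ref{poly}) to force $U$ to be one of the terms of the chain. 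You instead single out the \emph{minimal} polynomial $d_0$ via the ideal $I\subseteq\C[x]$ and peel off the leading basis coefficient of an arbitrary $x\in U$ with the Section~3 operators, so that every coefficient polynomial lands in $I=(d_0)$ and the induction closes. Your approach is more elementary and self-contained --- it uses only Lemmas~\ref{step01}--\ref{step12} and not the composition-series results --- and it pins down the generator $d_0$ canonically as the monic gcd. The paper's approach is shorter once Section~5's structure theory is available, but its ``$V$ must be one of the submodules in the chain'' step implicitly leans on that theory, whereas your coefficient-stripping argument makes $U=\U(\mS)d_0(C)w$ completely explicit. One cosmetic remark: your phrase ``all terms but the $(n,m)$ one are annihilated'' is a slight overstatement, since after $(P_+-\phi(P_+))^n$ the intermediate $b_j$ mix several $a_{nj'}$; what matters, and what Lemmas~\ref{step01}--\ref{step02} (resp.\ \ref{step11}--\ref{step12}) do give, is that the final output is a nonzero scalar multiple of $a_{nm}(C)w$, which is all you use.
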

To prove this theorem, we need the following lemma.

\begin{lemm}
\label{prelem}
If $\mathrm{Ann}_{\C[C_{0}]}(w)=0$,
then $V\cong M_{\phi}$.
\end{lemm}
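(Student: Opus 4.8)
The plan is to show that if $\mathrm{Ann}_{\C[C_0]}(w) = 0$, then the canonical surjection $\varphi: M_\phi \to V$ given by $uw' \mapsto uw$ (where $w'$ denotes the cyclic quasi-Whittaker vector of $M_\phi$) is in fact an isomorphism; i.e., $\Ker\varphi = 0$. Since $V$ is a quasi-Whittaker module of type $\phi$ with cyclic vector $w$, such a $\varphi$ exists by the universal property in Definition \ref{quasi-Whittaker}. The strategy is by contradiction: suppose $\Ker\varphi \neq 0$. By Theorem \ref{submod} (whose proof this lemma supports — so I must be careful not to invoke it circularly; instead I will argue directly), or rather by the structure theory already developed, I will extract a quasi-Whittaker vector out of $\Ker\varphi$ and derive a nonzero element of $\mathrm{Ann}_{\C[C_0]}(w)$.

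First I would note that $\Ker\varphi$ is a nonzero submodule of $M_\phi$, and since $M_\phi$ is itself a quasi-Whittaker module, Theorem \ref{contain} guarantees that $\Ker\varphi$ contains a nonzero quasi-Whittaker vector $w''$ of type $\phi$. By Proposition \ref{Wh-vec0}, every quasi-Whittaker vector of $M_\phi$ lies in $\C[C]w'$, so $w'' = g(C)w'$ for some nonzero $g(x) \in \C[x]$. Then $0 = \varphi(w'') = \varphi(g(C)w') = g(C)w = g(C_0)w$, using the fact recorded after Theorem \ref{simple-mod} that $Cw = C_0 w$ on a quasi-Whittaker vector (and hence $g(C)w = g(C_0)w$). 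This shows $g(C_0) \in \mathrm{Ann}_{\C[C_0]}(w)$ with $g \neq 0$, contradicting the hypothesis $\mathrm{Ann}_{\C[C_0]}(w) = 0$. Therefore $\Ker\varphi = 0$ and $\varphi$ is an isomorphism $M_\phi \xrightarrow{\sim} V$.

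The main obstacle is making sure the extraction of the quasi-Whittaker vector from $\Ker\varphi$ is applied legitimately: Theorem \ref{contain} is stated for an arbitrary quasi-Whittaker module $V$ and a nonzero submodule $W \subseteq V$, and here we take $V = M_\phi$, $W = \Ker\varphi$, so this is directly applicable with no circularity (Theorem \ref{contain} and Proposition \ref{Wh-vec0} precede this lemma and do not depend on it). The only subtlety is the bookkeeping between $C$ and $C_0$: since $w'$ is a quasi-Whittaker vector, Lemma \ref{Wh-vec} gives $pw' = \phi(p)w'$ and $qw' = \phi(q)w'$, from which $C w' = C_0 w'$ follows by the defining formula for $C$ together with the relation $C_0 = p^2 f - q^2 e - hpq$; consequently $g(C)$ and $g(C_0)$ act identically on $w'$ and, after applying $\varphi$, on $w$. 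With this identification in hand the argument closes immediately.
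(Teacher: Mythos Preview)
Your proof is correct and follows essentially the same approach as the paper's: both use the universal property to obtain the surjection $\varphi:M_\phi\to V$, assume $\Ker\varphi\neq 0$, invoke Theorem~\ref{contain} to extract a nonzero quasi-Whittaker vector from $\Ker\varphi$, and then apply Proposition~\ref{Wh-vec0} to write it as $g(C)w'$, yielding a nonzero annihilating polynomial and the desired contradiction. Your treatment of the $C$ versus $C_0$ bookkeeping is in fact more careful than the paper's, which simply writes $d(C_0)(1\otimes 1)$ directly, relying on the remark after Theorem~\ref{simple-mod} without further comment.
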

\begin{proof}
By the universal property of $M_{\phi}$, there exists
an epimorphism $\varphi: M_{\phi}\rightarrow V$.
It is clear that $\mathrm{Ker}\varphi$ is a submodule of $M_{\phi}$.
If $\mathrm{Ker}\varphi\neq0$, then Theorem \ref{contain} implies that there is
a nonzero quasi-Whittaker vector $w'\in \mathrm{Ker}\varphi$. It follows
from Proposition \ref{Wh-vec0}, $0\neq w'=d(C_{0})1\otimes1$ and
thus $0\neq d(C_{0})\in\mathrm{Ann}_{\C[C_{0}]}(w)$, which
is impossible. Therefore, $\mathrm{Ker}\varphi$ must be $0$ and $\varphi$ is an isomorphism.
\end{proof}

\textbf{Proof of Theorem \ref{submod}.}
Since $W_{\phi,\xi}$ is a maximal submodule
and a quasi-Whittaker module of type $\phi$ with
cyclic quasi-Whittaker vector $(C_{0}-\xi)w$,
using Lemma \ref{prelem}, we obtain that $W_{\phi,\xi}\cong M_{\phi}$.

By Proposition \ref{Wh-vec0} and Theorem \ref{contain}, we know that $V$
contains a quasi-Whittaker vector $d(C_{0})w$. By Lemma \ref{prelem},
we have $\U(\mS)d(C_{0})w\cong M_{\phi}$. Since $d(C_{0})$ can be written as a
product of linear factors, from the above we know that
there exists a chain of universal
quasi-Whittaker modules between $\U(\mS)d(C_{0})w$ and
$M_{\phi}$ satisfying that each quotient is simple.
Thus $V$ must be one of the submodules in the chain,
thus $V$ is a universal quasi-Whittaker module of type $\phi$.
$\hfill{}\Box$

\begin{theo}
The set of
quasi-Whittaker vectors in $V$ is $\C[C_{0}]w$.
\end{theo}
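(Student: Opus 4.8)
The plan is to reduce the statement to the two results we have already proved about quasi-Whittaker vectors, namely Proposition~\ref{Wh-vec0} (which handles the case $\mathrm{Ann}_{\C[C_0]}(w)=0$, i.e. $V\cong M_\phi$) and the corresponding statement for $L_{\phi,\xi}$ in Proposition~\ref{Wh-vec1}, and then to interpolate between them using the structural description of $V$ given by Theorem~\ref{poly} and Theorem~\ref{submod}. First I would note that $\C[C_0]w$ consists of quasi-Whittaker vectors: this is immediate from Lemma~\ref{Wh-vec} together with the remark that $Cw=C_0w$ on a quasi-Whittaker vector, so $p$ and $q$ act by $\phi(p),\phi(q)$ on any element of $\C[C_0]w$. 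So the content is the reverse inclusion: every quasi-Whittaker vector $w'\in V$ lies in $\C[C_0]w$.

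For the reverse inclusion I would split on whether $\mathrm{Ann}_{\C[C_0]}(w)=0$. If it is zero, then by Lemma~\ref{prelem} $V\cong M_\phi$ and the claim is exactly Proposition~\ref{Wh-vec0}. If $\mathrm{Ann}_{\C[C_0]}(w)\neq0$, let $d(C_0)=\prod_{i=1}^k(C_0-\xi_i 1)^{a_i}$ be its monic generator. By Theorem~\ref{poly}(ii) we have the direct-sum decomposition $V=V_1\oplus\cdots\oplus V_k$ with $V_j=\U(\mS)w_j$, $w_j=d_j(C_0)w$, and each $V_j$ indecomposable of composition length $a_j$ with all composition factors isomorphic to $L_{\phi,\xi_j}$. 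Given a quasi-Whittaker vector $w'=\sum_j w'_j$ with $w'_j\in V_j$, I would first argue that each component $w'_j$ is itself a quasi-Whittaker vector: since $p-\phi(p)$ and $q-\phi(q)$ preserve each summand $V_j$ (the decomposition is a decomposition of $\mS$-modules), $(p-\phi(p))w'=0$ forces $(p-\phi(p))w'_j=0$ for each $j$, and likewise for $q$. Thus it suffices to show each $w'_j\in\C[C_0]w_j$, because $\C[C_0]w_j=\C[C_0]d_j(C_0)w\subseteq\C[C_0]w$ and summing gives $w'\in\C[C_0]w$.

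So the problem reduces to a single indecomposable $V_j$, with $\mathrm{Ann}_{\C[C_0]}(w_j)=\C[C_0](C_0-\xi_j)^{a_j}$; renaming, assume $\mathrm{Ann}_{\C[C_0]}(w)=(C_0-\xi)^k$. Here I would run an induction on $k$, using the submodule chain $V=V_0\supseteq V_1\supseteq\cdots\supseteq V_k=0$, $V_i=\U(\mS)(C_0-\xi 1)^i w$, provided by Lemma~\ref{monic}. The base case $k=1$ is $V\cong L_{\phi,\xi}$ by Proposition~\ref{ann}, and then Proposition~\ref{Wh-vec1} gives the result. For the inductive step, let $w'$ be a quasi-Whittaker vector in $V$ and consider its image $\bar{w'}$ in $V/V_1\cong L_{\phi,\xi}$; it is a quasi-Whittaker vector there, so by Proposition~\ref{Wh-vec1} $\bar{w'}=c\,\overline{w}$ for some $c\in\C$, whence $w'-cw\in V_1$. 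Now $V_1$ is a quasi-Whittaker module with cyclic vector $w_1=(C_0-\xi1)w$ and $\mathrm{Ann}_{\C[C_0]}(w_1)=(C_0-\xi1)^{k-1}$, and $w'-cw$ is a quasi-Whittaker vector of $V_1$ (difference of two such), so by the induction hypothesis $w'-cw\in\C[C_0]w_1=\C[C_0](C_0-\xi1)w\subseteq\C[C_0]w$; hence $w'\in\C[C_0]w$. The main obstacle I anticipate is the bookkeeping in the inductive step — verifying that $w'-cw$ really is a \emph{cyclic-free} quasi-Whittaker vector of the submodule $V_1$ and that the inclusion $\C[C_0]w_1\subseteq\C[C_0]w$ holds at the level needed — but this is routine once Lemma~\ref{monic} and Proposition~\ref{Wh-vec1} are invoked; no new computation beyond the identities already collected is required. $\hfill\Box$
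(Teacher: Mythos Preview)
Your proof is correct and follows essentially the same approach as the paper: split on whether $\mathrm{Ann}_{\C[C_0]}(w)$ vanishes, invoke Lemma~\ref{prelem} and Proposition~\ref{Wh-vec0} in the first case, and in the second case induct on composition length, peeling off the top simple quotient via Proposition~\ref{Wh-vec1} to write $w'=cw+w''$ with $w''$ a quasi-Whittaker vector in a submodule of shorter length. The only structural difference is that you first pass through the direct-sum decomposition of Theorem~\ref{poly}(ii) to reduce to the primary case $\mathrm{Ann}_{\C[C_0]}(w)=(C_0-\xi)^k$ before inducting, whereas the paper inducts directly on the full composition length $n$, using Theorem~\ref{poly}(i) to know that the chosen maximal submodule $V_1$ is generated by some $(C_0-\xi)w$; your extra reduction is harmless but not needed.
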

\begin{proof}
If $\mathrm{Ann}_{\C[C_{0}]}(w)=0$, using Proposition \ref{Wh-vec0} and Lemma \ref{prelem} we
know that the conclusion holds. If
$\mathrm{Ann}_{\C[C_{0}]}(w)\neq0$, by
Theorem \ref{poly}, we obtain that $V$ has finite composition
length $n$.

Next we induce on $n$.
If $n=1$, then $V$ is simple, using
Proposition \ref{Wh-vec1} we obtain the conclusion. Suppose that $V$
is a module with arbitrary composition length $n$.
Let $$V=V_{0}\supseteq V_{1}\supseteq\cdots\supseteq V_{n}=0$$
be a composition series of $V$, and assume $V_{1}$ has cyclic
quasi-Whittaker vector $w_{1}=(C_{0}-\xi1)w$. Let $w'\in V$ be a
quasi-Whittaker vector. Since $V/V_{1}$ is simple,
by Proposition \ref{Wh-vec1} we obtain that the image of $w'$
in $V/V_{1}$ is a scalar multiple of $\bar{w}$.
Therefore, in $V$, $w'=cw+w''$ for some $c\in\C$ and
$w''\in V_{1}$. Note that $w''=w'-cw$ is also a
quasi-Whittaker vector. Since $V_{1}$ has composition
length $n-1$, by induction, we have
$$w''=d(C_{0})w_{1}
=d(C_{0})(C_{0}-\xi1)w \ \mbox{for some} \ d(C_{0})\in\C[C_{0}]. $$
Therefore, $w'=cw+d(C_{0})(C_{0}-\xi1)w$, the statement holds.
\end{proof}
\vs{10pt}
\par


 \vs{10pt}
\par
\cl{\bf REFERENCES} \vs{-1pt}\small \lineskip=4pt
\begin{enumerate}
{\footnotesize
\bibitem{[BM]} P. Batra, V. Mazorchuk, Blocks and modules for Whittake rpairs,
{\it J. Pure Appl. Algebra} {\bf215} (2011), 1552-1568.

\bibitem{[BO]} G. Benkart, M. Ondrus, Whittaker modules for generalized Weyl algebras,
{\it Represent. Theory} {\bf 13} (2009), 141-164.

\bibitem{[B]} R. Block, The irreducible representations of the Lie algebra $sl(2)$ and of the
Weyl algebra. {\it Adv. Math.} {\bf39} (1981) 69-110.

\bibitem{[C]} K. Christodoupoulou, Whittaker modules for Heisenberg algebras
and imaginary Whittaker modules for affine Lie algebras,
{\it J. Algebra} {\bf 320} (2008), 2871-2890.

\bibitem{[CG]} H. Chen, X. Guo, New simple modules for the Heisenberg-Virasoro algebra,
{\it J. Algebra}, {\bf390} (2013), 77-86.

\bibitem{[D]} B. Dubsky, Classification of simple weight modules with
finite-dimensional weight spaces over the Schr\"{o}dinger
algebra, arXiv: 1309.1346v1, [math. RT].

\bibitem{[DDM]} V. Dobrev, H. Doebner, C. Mrugalla,
Lowest weight representations of the Schr\"{o}dinger algebra and generalized
heat Schr\"{o}dinger equations, {\it Rep. Math. Phys.} {\bf39} (1997), 201-218.

\bibitem{[GL]} X. Guo, X. Liu, Whittaker modules over generalized Virasoro algebra,
{\it J. Math. Phys.} {\bf 52}(9) (2011), 093504.

\bibitem{[GZ]} X. Guo, K. Zhao, Irreducible representations of
untwisted affine Kac-Moody algebras,
 arXiv:1305.4059, [math. RT].

\bibitem{[K]} B. Kostant, On Whittaker vectors and representation theory,
{\it Invent. Math.} {\bf48} (2) (1978), 101-184.

\bibitem{[LGZ]} R. L\"{u}, X. Guo, K. Zhao, Irreducible modules over the Virasoro algebra,
{\it Doc. Math.} {\bf 16} (2011), 709-721.

\bibitem{[LMZ1]} R. L\"{u}, V. Mazorchuk, K. Zhao, On simple modules over
conformal Galilei algebras,  arXiv:1310.6284 [math. RT].

\bibitem{[LMZ2]} R. L\"{u}, V. Mazorchuk, K. Zhao,
Classification of simple weight modules over the
Schr\"{o}dinger algebra, preprint, 2013.

\bibitem{[LWZ]} D. Liu, Y. Wu, L. Zhu, Whittaker modules for the twisted
Heisenberg-Virasoro algebra, {\it J. Math. Phys.} {\bf51} (2010), 023524,12pp.

\bibitem{[M]} V. Mazorchuk, Lectures on $sl_2(\C)$-modules, Imperial College, 2010.

\bibitem{[MZ]} V. Mazorchuk, K. Zhao,
Simple Virasoro modules which are locally finite over a positive part,
arXiv:1205.5937v1, {\it Sel. Math. New Ser.}
DOI 10.1007/s00029-013-0140-8 {September 2013}, to appear.

\bibitem{[O]} M. Ondrus, Whittaker modules for $U_q(sl_2)$,
{\it J. Algebra} {\bf 289} (2005), 192-213.

\bibitem{[OW]} M. Ondrus, E. Wiesner, Whittaker modules for the Virasoro algebra,
{\it J. Algebra Appl.} {\bf 8} (2009), 363-377.

\bibitem{[S]} A. Sevostyanov, Quantum deformation of Whittaker modules and the Toda lattice,
{\it Duke Math. J.} {\bf 105} (2) (2000), 211-238.

\bibitem{[T]} X. Tang, On Whittaker modules over a class of algebras similar to $U(sl_2)$,
{\it Front. Math. China} {\bf2} (2007), 127-142.

\bibitem{[Wa]} B. Wang, Whittaker modules for graded Lie algebras,
{\it Algebr. Represent. Theory} {\bf14} (2011), 691-702.


\bibitem{[Wu]} Y. Wu, Finite dimensional indecomposable modules for Schr\"{o}dinger algebra
 {\it J. Math. Phys.} {\bf54} (2013), 07350380 9pp.

\bibitem{[WZ]} Y. Wu, L. Zhu, Simple weight modules of
Schrodinger algebra, {\it Linear Algebr. Appl.} {\bf 438} (2013), 559-563.

\bibitem{[ZC]}
X. Zhang, Y. Cheng, Simple Schr\"{o}dinger modules
which are locally finite over the positive part,
arXiv: 1311.2118 [math. RT].

\bibitem{[ZTL]}
X. Zhang, S. Tan, H. Lian, Whittaker modules for the Schr\"{o}dinger-Witt algebra,
{\it J. Math. Phys.} {\bf 51} (2010), 083524, 17pp.

}
\end{enumerate}
\end{document}